\setlist[enumerate]{label = (\alph*), ref=(\text{\alph*)}}
\setlist[itemize]{nolistsep}
\renewcommand{\phi}{\varphi}
\renewcommand{\ge}{\geqslant}
\renewcommand{\le}{\leqslant}
\newcommand{\CC}{\mathbb{C}}
\newcommand{\KK}{\mathbb{K}}
\renewcommand{\AA}{\mathbb{A}}
\newcommand{\QQ}{\mathbb{Q}}
\newcommand{\ZZ}{\mathbb{Z}}
\newcommand{\GG}{\mathbb{G}}
\newcommand{\PP}{\mathbb{P}}
\newcommand{\FF}{\mathbb{F}}
\newcommand{\closure}[1]{\overline{#1}}
\newcommand{\pairing}[2]{\langle #1, #2 \rangle}
\newcommand{\reg}{\mathrm{reg}}
\DeclareMathOperator{\Cone}{Cone}
\DeclareMathOperator{\GL}{GL}
\DeclareMathOperator{\SL}{SL}
\DeclareMathOperator{\Sp}{Sp}
\DeclareMathOperator{\Cl}{Cl}
\DeclareMathOperator{\Aut}{Aut}
\DeclareMathOperator{\SAut}{SAut}
\DeclareMathOperator{\PGL}{PGL}
\DeclareMathOperator{\Spec}{Spec}
\DeclareMathOperator{\Hom}{Hom}
\theoremstyle{plain}
\newtheorem{lemma}{Lemma}[section]
\newtheorem{proposition}[lemma]{Proposition}
\newtheorem{theorem}[lemma]{Theorem}
\newtheorem{corollary}[lemma]{Corollary}
\theoremstyle{definition}
\newtheorem{example}[lemma]{Example}
\newtheorem{conjecture}[lemma]{Conjecture}
\newtheorem{question}[lemma]{Question}
\newtheorem{problem}[lemma]{Problem}
\theoremstyle{remark}
\newtheorem{remark}[lemma]{Remark}
\begin{document}

\title[Homogeneous algebraic varieties and transitivity degree]{Homogeneous algebraic varieties and transitivity degree}
\author{Ivan Arzhantsev}
\address{HSE University, Faculty of Computer Science, Pokrovsky Boulevard 11, Moscow, 109028 Russia}
\email{arjantsev@hse.ru}

\author{Kirill Shakhmatov}
\address{HSE University, Faculty of Computer Science, Pokrovsky Boulevard 11, Moscow, 109028 Russia}
\email{bagoga@list.ru}

\author{Yulia Zaitseva}
\address{HSE University, Faculty of Computer Science, Pokrovsky Boulevard 11, Moscow, 109028 Russia}
\email{yuliazaitseva@gmail.com}

\thanks{This research was supported by the Ministry of Science and Higher Education of the
Russian Federation, agreement 075-15-2019-1620 date 08/11/2019 and 075-15-2022-289 date 06/04/2022}

\subjclass[2010]{Primary 14L30, 14R10; \ Secondary 13E10, 14M25, 20M32}

\keywords{Algebraic variety, automorphism group, algebraic group, homogeneous space, quasi-affine variety, transitivity degree, infinite transitivity, toric variety, unirationality}

\begin{abstract}
Let $X$ be an algebraic variety such that the group $\Aut(X)$ acts on $X$ transitively. We define the transitivity degree of $X$ as a maximal number $m$ such that the~action of $\Aut(X)$ on $X$ is $m$-transitive. If the action of $\Aut(X)$ is $m$-transitive for all $m$, the transitivity degree is infinite. We compute the transitivity degree for all quasi-affine toric varieties and for many homogeneous spaces of algebraic groups. Also we discuss a~conjecture and open questions related to this invariant. 
\end{abstract}

\maketitle


\section*{Introduction}

Let $X$ be a set and $G$ be a group acting on~$X$. Recall that the action $G \curvearrowright X$ is \emph{transitive} if for any points $x, y \in X$ there is an element $g \in G$ such that $gx=y$. This notion has the following generalization. Fix a positive integer $m$. The action of $G$ on $X$ is \emph{$m$-transitive} if for any pairwise distinct points $x_1, \ldots, x_m \in X$ and any pairwise distinct points $y_1, \ldots, y_m \in X$ there is an element $g \in G$ such that $gx_i = y_i$ for all $1 \le i \le m$. If the action is $m$-transitive for all positive integers $m$, then the action is said to be \emph{infinitely transitive}. 

In this context, it is natural to define the \emph{transitivity degree} $\theta_G(X)$ of an action $G \curvearrowright X$ as the maximal number $m$ such that the action of $G$ on $X$ is $m$-transitive; we let $\theta_G(X)=\infty$ if the action of $G$ on $X$ is infinitely transitive. If the action is not transitive, we let the transitivity degree equal zero. For example, the action of the symmetric group $S_n$ on a set of order $n$ has the transitivity degree~$n$, while the restricted action of the alternating subgroup~$A_n$ is only $(n-2)$-transitive. A generalization of the classical result of Jordan~\cite{Jo1871} based on the classification of finite simple groups claims that there is no other action of a finite group~$G$ on a set~$X$ with $\theta_G(X)>5$; see~\cite[Section~2.1]{DM}.

In this paper we study the transitivity degree of the action of the automorphism group $\Aut(X)$ on an algebraic variety $X$. An algebraic variety $X$ is called a \emph{homogeneous variety} if the group $\Aut(X)$ acts on $X$ transitively. Denote the transitivity degree of the action $\Aut(X) \curvearrowright X$ by~$\theta(X)$ and call it the \emph{transitivity degree} of a variety $X$. Notice that for any regular action $G\times X \to X$ of an algebraic group $G$ we have $\theta_G(X) \le \theta(X)$ since $G$ acts on $X$ by automorphisms. 

The number $\theta(X)$ is an invariant of an algebraic variety $X$. Let us compute this invariant for some varieties. We denote by $\GG_m$ and $\GG_a$ the multiplicative group and the additive group of the base field $\KK$, respectively. We claim that $\theta(\AA^1) = 2$. Indeed, the automorphism group $\Aut(\AA^1) \cong \GG_a \leftthreetimes \GG_m$ consists of affine transformations. Such transformations can move any pair of distinct points to any pair of distinct points, and for triples this does not hold since the intersection of stabilizers of two distinct points is trivial. Hovewer, starting from dimension two, it is well known that the automorphism group $\Aut(\AA^n)$ acts on the affine space $\AA^n$ infinitely transitive, i.e. we have $\theta(\AA^n) = \infty$ for~$n\ge 2$. 

For the projective line~$\PP^1$, the automorphism group $\Aut(\PP^1)$ is $\PGL_2(\KK)$, and any triple of distinct points can be mapped to any triple of distinct points. Indeed, any two distinct points can be mapped to $[1:0], [0:1]$ by sending the corresponding vectors in $\KK^2$ to the basic vectors $(1, 0), (0, 1)$, and the third point can be mapped to $[1:1]$ by a diagonal matrix in $\PGL_2(\KK)$, which stabilizes the pair $[1:0], [0:1]$. On the other hand, projective transformations preserve the cross-ratio of a quadruple of points, so $\theta(\PP^1) = 3$. In contrast to the affine case, in dimensions $n \ge 2$ we have $\theta(\PP^n)=2$ since an automorphism from $\Aut(\PP^n) = \PGL_{n+1}(\KK)$ cannot move a triple of points on a line to a triple of points not on a line. 

From now on we assume that the base field $\KK$ is algebraically closed and of characteristic zero. An algebraic variety $X$ is called a \emph{homogeneous space} if there exists a transitive action of an algebraic group $G$ on $X$. In this case $X$ can be identified with the variety of left cosets $G/H$, where $H$ is the stabilizer in $G$ of a point on $X$. Homogeneous spaces are classical mathematical objects with rich structural theory and many applications; see e.g.~\cite{BSU2013, Gr1997, Hu1975, OV, PV1994, Ti2011}.  

Clearly, any homogeneous space is a homogeneous variety, but not vice versa; the corresponding example is given in Section~\ref{subsec_toric}. It is known that the connected component $\Aut(X)^0$ of the automorphism group of a complete algebraic variety $X$ is an algebraic group~\cite{Ram}. But even in this case the structure of the whole group $\Aut(X)$ is quite mysterious; see e.g. \cite{Brion} for a discussion. In this paper we concentrate mostly on the case of quasi-affine homogeneous varieties. 

Let us describe the content of the paper. We begin with some preliminaries in Section~\ref{prel} and earlier results on infinite transitivity and flexible varieties in Section~\ref{itfv}. In Section~\ref{invfun} we prove that if there is a non-constant invertible regular function on a homogeneous variety~$X$, then $\theta(X)=1$ (Theorem~\ref{theoif}). Also we show that if $X$ is a homogeneous variety with $\KK[X]\ne\KK$ and $X$ is not quasi-affine, then $\theta(X) = 1$ (Proposition~\ref{propaa}). For the last result we need to show that regular functions on an algebraic variety separate points if and only if the variety is quasi-affine. We give a short proof of this fact in Section~\ref{secapp}.  

We apply the results of Section~\ref{invfun} to two particular classes of homogeneous varieties. In Section~\ref{subsec_toric} we deal with toric varieties, and in Section~\ref{homsp} we compute the transitivity degree for many homogeneous spaces of linear algebraic groups. Let us illustrate the second result by examples. If $X$ is the variety of all $(n\times n)$-matrices with determinant 1, then for all $m$ any two $m$-tuples of pairwise distinct matrices are equivalent with respect to the action of $\Aut(X)$. But if $X$ is the variety of all non-degenerate $(n\times   n)$-matrices, then there are non-equivalent pairs of matrices. 

Finally, in Section~\ref{mainconj} we present our main conjecture: if $X$ is a quasi-affine homogeneous variety with $\dim X\ge 2$ and $\KK[X]^{\times}=\KK^{\times}$, then $\theta(X)=\infty$. This conjecture is confirmed in particular cases by the results of previous sections. Also we discuss some properties of homogeneous varieties. Clearly, every homogeneous variety is smooth, but it is easy to give examples of both affine and projective smooth varieties that are not homogeneous. We consider rationality properties of homogeneous varieties and ask whether any homogeneous variety is quasi-projective. 

The authors are grateful to Yuri Prokhorov and Dmitri Timashev for useful discussions. 

\section{Preliminaries}
\label{prel}
In this section we recall some facts and results that will be needed below. 

In \cite{Po1978} all curves with infinite automorphism groups are found. In particular, every such curve which is smooth is either $\PP^1, \AA^1$ or $\AA^1 \setminus \{ 0 \}$. It follows that every homogeneous curve is a homogeneous space and $\theta(\PP^1)=3$, $\theta(\AA^1)=2$, $\theta(\AA^1 \setminus \{ 0 \})=1$. So from now on we assume that $X$ has dimension at least 2. 

Clearly, if $X$ and $Y$ are homogeneous varieties, then the group $\Aut(X)\times\Aut(Y)$ acts on $X\times Y$ transitively, and the variety $X\times Y$ is homogeneous as well. Let us notice that the transitivity degree of direct products is not determined by the transitivity degree of the factors: we have $\theta(\AA^1)=2$ and $\theta(\AA^1\times\AA^1)=\infty$, while for any complete varieties $X$ and $Y$ of positive dimension we expect that $\theta(X\times Y)=1$. An essential argument in favor of the last statement is Blanchard's Lemma claiming that $\Aut(X\times Y)^0=\Aut(X)^0\times\Aut(Y)^0$ for any complete varieties~$X$ and $Y$, see~\cite[Corollary~7.2.3]{Br22017}. Here $\Aut(X)^0$ is the connected component of $\Aut(X)$ in the sense of~\cite{Ram}; see also~\cite{Po2014}.

\smallskip 

Let $X$ be a complete homogeneous space of an algebraic group. Then $X$ is isomorphic to $A\times Y$, where $A$ is an abelian variety and $Y$ is a complete homogeneous space of a linear algebraic group; see~\cite[Theorem~1.3.1]{BSU2013}. In this case $Y$ is a flag variety of a semisimple linear algebraic group. 

\smallskip

Let us give one more preparatory result. 

\begin{lemma}
\label{lemcom}
Let $X=G/H$ be an irreducible homogeneous space of an algebraic group $G$ and $G^0$ be the connected component of unity in $G$. Then $\theta_G(X)=\theta_{G^0}(X)$.
\end{lemma}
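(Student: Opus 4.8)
The plan is to prove, for every positive integer $m$, that the action of $G$ on $X$ is $m$-transitive if and only if the action of $G^0$ is $m$-transitive. Once this equivalence is established, the two sets of integers $m$ over which $\theta_G(X)$ and $\theta_{G^0}(X)$ are defined as suprema coincide, so the two degrees are equal. One implication is immediate: since $G^0\subseteq G$, any action of $G^0$ that is $m$-transitive is \emph{a fortiori} $m$-transitive for $G$, giving $\theta_{G^0}(X)\le\theta_G(X)$. All the content lies in the reverse implication.

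To treat all $m$ at once, I would reformulate $m$-transitivity as ordinary transitivity on the configuration space $U_m$ of $m$-tuples of pairwise distinct points,
\[
U_m=\{(x_1,\dots,x_m)\in X^m : x_i\ne x_j \text{ for all } i\ne j\};
\]
a group acts $m$-transitively on $X$ precisely when it acts transitively on $U_m$ for the diagonal action. The key geometric input is that $U_m$ is irreducible: as $X$ is irreducible and $\KK$ is algebraically closed, the product $X^m$ is irreducible, and $U_m$ is a nonempty open subset of it (nonempty because $X$, being of positive dimension, is infinite). Thus the problem reduces to the following statement, applied with $Y=U_m$ (the case $m=1$, $Y=X$, being transitivity of $G^0$ on $X$ itself): if an algebraic group $G$ acts transitively on an irreducible variety $Y$ and $G^0$ is normal of finite index in $G$, then $G^0$ already acts transitively on $Y$.

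For this reduced statement I would argue as follows. Write $G=\bigsqcup_{i=1}^{k} g_iG^0$ with $k=[G:G^0]<\infty$, fix $y\in Y$, and set $O=G^0 y$. Since $G$ is transitive, $Y=Gy=\bigcup_{i=1}^k g_i O$. Because $G^0$ is normal, each $g_iO=g_iG^0y=G^0 g_iy$ is again a single $G^0$-orbit, and left translation by $g_i$ is an isomorphism $O\to g_iO$, so all these orbits have one common dimension $d$. As $O$ is the image of the connected (hence irreducible) group $G^0$, it is irreducible, and like every orbit it is locally closed. Since $Y$ is the union of finitely many such orbits, $d=\dim Y$; therefore $\overline{O}$ is an irreducible closed subset of dimension $\dim Y$ inside the irreducible $Y$, whence $\overline{O}=Y$ and $O$ is open and dense in $Y$. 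Two dense open subsets of an irreducible variety must meet, while distinct $G^0$-orbits are disjoint, so there can be only one orbit, i.e. $O=Y$ and $G^0$ is transitive.

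The main obstacle, and indeed the only nontrivial point, is this last dimension count: one must know that the finitely many $G^0$-orbits all have the \emph{same}, hence full, dimension, which is exactly where normality of $G^0$ together with transitivity of $G$ enters, and that irreducibility of $Y$ rules out two full-dimensional orbits. The remaining ingredients—the equivalence of $m$-transitivity with transitivity on $U_m$, and the irreducibility of $U_m$—are routine.
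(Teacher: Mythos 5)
Your proof is correct, and while it rests on the same three pillars as the paper's---normality of $G^0$, finiteness of $G/G^0$, and irreducibility of the configuration space of pairwise distinct tuples---it is organized along a genuinely different route. The paper proves the contrapositive (if $G^0$ is not $k$-transitive, then neither is $G$) via a case analysis: if $G^0$ has an open orbit on $X^k$, that orbit is unique by irreducibility, hence $G$-invariant by normality, and being a proper subset of the configuration set $U$ it obstructs $G$-transitivity; if $G^0$ has no open orbit, then $U$ contains infinitely many $G^0$-orbits, and a single $G$-orbit, being a union of at most $[G:G^0]$ many $G^0$-orbits, cannot exhaust them. You instead prove the implication directly, isolating a clean reusable statement (a transitive action of $G$ on an irreducible variety restricts to a transitive action of any normal subgroup of finite index): writing $Y=U_m$ as the finite union of the translates $g_iO$ of one $G^0$-orbit, you use normality to see each translate is itself a $G^0$-orbit, use the fact that translation preserves dimension to get $\dim O=\dim Y$, conclude each orbit is dense open in $Y$, and then disjointness of distinct orbits versus the fact that dense open sets must meet forces all of them to coincide. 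What your version buys is a uniform argument with no case split and no counting step (``no open orbit forces infinitely many orbits'' is never needed), replaced by a single dimension count; what the paper's version buys is that in each case it exhibits the concrete $G$-invariant obstruction to transitivity. Both proofs are complete and elementary.
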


\begin{proof}
It suffices to prove that if the action of $G^0$ on $X$ is not $k$-transitive, then the action of $G$ on $X$ is not $k$-transitive too. The first condition means that the group $G^0$
does not act on the open subset $U$ of collections of pairwise distinct points in $X^k$ transitively. Since $G^0$ is normal in $G$, the group $G$ permutes $G^0$-orbits. So if $X^k$ contains an open $G^0$-orbit, such an orbit is unique and so it is invariant under $G$. If there is no open $G^0$-orbit in $X^k$, then the number of $G^0$-orbits in $U$ is infinite. Since the group $G/G^0$ is finite, the group $G$ can not act on $U$ transitively.
\end{proof} 

\begin{remark}
\label{rem1}
It will be useful to prove an analogue of Lemma~\ref{lemcom} in the case when $G$ is the automorphism group $\Aut(X)$ of an algebraic variety $X$ and $G^0$ is the connected component $\Aut(X)^0$. At the moment we have no such proof. 
\end{remark}

The main result of paper~\cite{Kn1983} is the following complete classification of $2$-transitive actions of algebraic groups. 

\begin{theorem} 
\label{tknop}
Let an algebraic group $G$ act effectively on an algebraic variety $X$ with $\dim X \ge 1$. Then we have $\theta_G(X) \ge 2$ exactly in the following cases: 
\begin{enumerate}
  \item the standard action of $G = \PGL_{n+1}(\KK)$ on $X = \PP^n$;
  \item $G = L \rightthreetimes V$, $X = \AA^n$, where $V = \GG_a^n$ and $L$ is one of the following up to a product by a subgroup of the group $\GG_m$ of scalar matrices:
  \begin{itemize}
    \item $\GG_m$ if $n=1$;
    \item $\SL_n(\KK)$ if $\dim V \ge 2$;
    \item $\Sp_{n}(\KK)$ if $\dim V \ge 2$ even;
  \end{itemize}
Such groups $L$ are precisely the linear algebraic groups that act on $V \setminus \{0\}$ transitively. The action is given by the formula $(A, t)(v) = Av + t$, $A \in L, v,t \in V$ after the identification $X \cong V$. 
\end{enumerate}
\end{theorem}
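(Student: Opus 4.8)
The plan is to establish the two directions separately: a direct verification that the listed actions satisfy $\theta_G(X)\ge 2$, and a structural argument that these are the only possibilities. For the hard direction, note first that $\theta_G(X)\ge 2$ forces transitivity, so $X=G/H$ with $H=G_x$ the stabilizer of a point $x$, and by Lemma~\ref{lemcom} I may pass to the connected component and assume $G$ connected. Throughout I would use the reformulation that $\theta_G(X)\ge 2$ is equivalent to $H$ acting transitively on $X\setminus\{x\}$; when $\dim X\ge 1$ this forces $x$ to be the unique $H$-fixed point, so $X$ carries an $H$-action with exactly two orbits. A first reduction makes $G$ affine: by Chevalley's theorem $G$ surjects onto an abelian variety, inducing a $G$-equivariant morphism $\pi\colon X\to B$ onto a quotient abelian variety on which $G$ acts by translations, and if $\dim B>0$ then the difference $\pi(x_1)-\pi(x_2)$ of base points is a $G$-invariant of pairs of points, obstructing $2$-transitivity; hence $B$ is trivial and $G$ is linear algebraic.

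I would then split on whether $X$ is complete. If $X$ is complete, then since $G$ is now affine the structure result recalled in Section~\ref{prel} shows $X$ is a flag variety $G/P$ of a semisimple group with $H=P$ parabolic. The $P$-orbits on $G/P$ are the double Schubert cells indexed by $W_P\backslash W/W_P$, where $W$ is the Weyl group and $W_P$ its parabolic subgroup, so $2$-transitivity is precisely the condition that this double-coset set has two elements, i.e.\ that $W$ acts $2$-transitively on $W/W_P$. Running through the $2$-transitive actions of Weyl groups on such parabolic quotients isolates $W=S_{n+1}$ with $W_P=S_n$, namely the maximal parabolic of $\PGL_{n+1}(\KK)$ with quotient $\PP^n$; the direct check from the Introduction confirms $2$-transitivity there, giving case~(a).

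The substantial case is $X$ not complete, where the goal is to prove $X\cong\AA^n$ with $G=L\rightthreetimes V$ acting by affine transformations, and this geometric identification is where I expect the main difficulty to lie. The idea is to use that the unique $H$-fixed point $x$ lies in the closure of the single open orbit $X\setminus\{x\}$: one first argues that $X$ is affine and that some one-parameter subgroup $\GG_m\subseteq H$ contracts $X$ to the attractive fixed point $x$, and then, since $X$ is smooth, a Bialynicki--Birula type linearization identifies $X$ $H$-equivariantly with the tangent space $T_xX\cong\AA^n$ on which $H$ acts linearly and transitively on $T_xX\setminus\{0\}$. The translations of $\AA^n$ then form a normal unipotent subgroup $V=\GG_a^n$ acting transitively on $X$, with $H=L$ as linear part, so $G=L\rightthreetimes V$ and the action takes the form $(A,t)v=Av+t$; the delicate points are the affineness of $X$, the existence of the contracting subgroup, and the vanishing of the higher-order terms that makes the local linear model global.

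It remains to classify the connected linear groups $L\subseteq\GL(V)$ acting transitively on $V\setminus\{0\}$. Transitivity forces the representation to be irreducible, since an invariant proper subspace $W$ would make $W\setminus\{0\}$ a proper invariant subset, and it also makes $L$ transitive on the projectivization $\PP(V)$ via the equivariant projection $V\setminus\{0\}\to\PP(V)$. Invoking the classification of irreducible linear groups transitive on projective space leaves only $\SL_n(\KK)$ on $\PP^{n-1}$ and $\Sp_{2m}(\KK)$ on $\PP^{2m-1}$: every other candidate, such as $\SO_n(\KK)$, the group $G_2$, or the spin groups, preserves a nonzero quadratic form and hence has the isotropic quadric as a proper invariant subset of $\PP(V)$, so it is not even transitive on $\PP(V)$. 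Finally one checks that $\SL_n(\KK)$ and $\Sp_{2m}(\KK)$ (and $\GG_m$ when $n=1$) act transitively on $V\setminus\{0\}$ by Witt's theorem, and that the only remaining freedom is multiplication by a subgroup of scalar matrices $\GG_m$, which produces exactly the list in case~(b) and completes the classification.
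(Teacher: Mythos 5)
Your proposal cannot be compared with ``the paper's own proof'' because there is none: Theorem~\ref{tknop} is quoted verbatim as the main result of Knop's article~\cite{Kn1983}, and the only internal commentary on its proof is Remark~\ref{rem2}, which records just the first step of Knop's argument (an effective action with $\theta_G(X)\ge 2$ forces $G$ to be linear). So your outline has to be judged on its own, as a reconstruction of a cited theorem; judged that way it is a sensible plan, but it has genuine gaps at exactly the decisive points.

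First, your reduction to linear $G$ does not close. Triviality of $B=G/(G_{\mathrm{aff}}H)$ yields only $G=G_{\mathrm{aff}}H$, i.e.\ that the affine part of $G$ already acts transitively; it does not make $G$ itself affine. To finish one must invoke effectiveness: the anti-affine part $G_{\mathrm{ant}}$ is central, so $G_{\mathrm{ant}}\cap H$ lies in the kernel of the action and is trivial; from $G=G_{\mathrm{aff}}H$ the group $H^0$ surjects onto the abelian variety $G/G_{\mathrm{aff}}$, but the anti-affine part of $H^0$ lies in $G_{\mathrm{ant}}\cap H=\{e\}$, so $H^0$ is affine and therefore $G/G_{\mathrm{aff}}$ is trivial.

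Second, and more seriously, in the non-complete case you assert that once $X\cong\AA^n$ with $H=L$ acting linearly, ``the translations of $\AA^n$ then form a normal unipotent subgroup $V=\GG_a^n$'' of $G$. That the translations lie in $G$ at all is precisely the content of case (b), and nothing you have established puts them there: $G$ is an abstract group of which you know only the stabilizer $H$ and the underlying variety. A correct completion would run, for instance: $\AA^n$ with $n\ge 1$ is not a homogeneous space of a reductive group (Matsushima's criterion plus Mostow's homotopy equivalence with a compact homogeneous space), so $R_u(G)\neq\{e\}$; the intersection $R_u(G)\cap H$ is a normal unipotent subgroup of the reductive group $H$, hence trivial; by the Kostant--Rosenlicht theorem the orbit $R_u(G)\cdot 0$ is closed in $\AA^n$, and being $H$-stable of positive dimension it equals $\AA^n$, so $R_u(G)$ acts simply transitively and $G=H\rightthreetimes R_u(G)$; finally, equivariance kills the Lie bracket (since $\Hom_L(\Lambda^2 V,V)=0$) and forces the orbit map $R_u(G)\to\AA^n$ to be linear (since $V$ does not occur in $S^kV$ for $k\ge 2$), so $R_u(G)=\GG_a^n$ acts by translations. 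Together with the linearization steps you yourself flag as delicate (affineness of $X$, the contracting $\GG_m$, the Bia\l{}ynicki-Birula argument), and the appeal to Onishchik's classification of transitive actions on $\PP^{n-1}$ --- a result of depth comparable to the theorem itself --- this means your text is an outline of a strategy rather than a proof.
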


\begin{remark}
\label{rem2}
It is explained at the begining of the proof of~\cite[Satz~2]{Kn1983} that if an algebraic group $G$ acts effectively on a variety $X$ with $\theta_G(X)\ge 2$, then $G$ is a linear algebraic group. 
\end{remark}

It follows that there is no action of an algebraic group $G$ on $X$ with $\theta_G(X) \ge 4$. Moreover, the action of $\PGL_2(\KK)$ on $\PP^1$ is the unique action of an algebraic group with $\theta_G(X) = 3$. 

Let us recall that an $m$-transitive action $G \curvearrowright X$ is \emph{simply $m$-transitive} if the stabilizer in $G$ of an $m$-tuple of pairwise distinct points is trivial. One can check that the unique 3-transitive action $\PGL_2(\KK) \curvearrowright \PP^1$ is simply 3-transitive and the only simply 2-transitive action of a linear algebraic group is the action of $\Aut(\AA^1) = \GG_m \rightthreetimes \GG_a$ on $\AA^1$.

\begin{corollary}
If an algebraic group $G$ acts on an algebraic variety $X$ with $\theta_G(X)\ge 2$, then $X$ is either $\AA^n$ or $\PP^n$. 
\end{corollary}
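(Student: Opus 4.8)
The plan is to reduce the statement to Knop's classification (Theorem~\ref{tknop}), which applies only to \emph{effective} actions, whereas the corollary makes no effectiveness assumption. So the first step is to factor out the inefficiency. I would set $N = \{g \in G : gx = x \text{ for all } x \in X\}$, the kernel of the action. This is a closed normal subgroup of $G$, hence the quotient $\overline{G} = G/N$ is again an algebraic group, and the induced action $\overline{G} \curvearrowright X$ is regular and effective.

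Next I would observe that passing to the quotient does not change the transitivity degree, that is, $\theta_{\overline{G}}(X) = \theta_G(X)$. This is immediate from the definitions: an element $g \in G$ and its image $gN \in \overline{G}$ act identically on $X$, so $G$ and $\overline{G}$ have exactly the same orbits on the collections of pairwise distinct points in $X^m$ for every $m$. In particular $\theta_{\overline{G}}(X) = \theta_G(X) \ge 2$.

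Now I would invoke Theorem~\ref{tknop} for the effective action of $\overline{G}$ on $X$ (using the standing assumption $\dim X \ge 1$). Every case appearing in that classification has underlying variety either $X = \PP^n$ in case~(1) or $X = \AA^n$ in case~(2), which is precisely the asserted conclusion.

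The argument is short, and I do not expect a serious obstacle; the only points requiring care are the reduction to an effective action — verifying that $G/N$ carries the structure of an algebraic group acting regularly on $X$, so that Theorem~\ref{tknop} is applicable — and the remark that the transitivity degree depends only on the induced permutation action and is therefore unaffected by quotienting out the kernel. One should also keep the hypothesis $\dim X \ge 1$ in mind, so as to exclude the degenerate zero-dimensional situation, where a finite set carrying a $2$-transitive group action is neither $\AA^n$ nor $\PP^n$.
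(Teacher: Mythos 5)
Your proposal is correct and follows the same route as the paper, which states this corollary as an immediate consequence of Theorem~\ref{tknop} without further proof: the only content to add is exactly the reduction you give, passing to the effective action of $G/N$ on $X$ (which leaves the transitivity degree unchanged) so that Knop's classification applies. Your closing remark about the zero-dimensional case is a sensible precaution, consistent with the hypothesis $\dim X \ge 1$ in Theorem~\ref{tknop}.
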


For a classification of 2-transitive actions of Lie groups, see~\cite{Bor, Kram, Tits}. 

\section{Infinite transitivity and flexible varieties} 
\label{itfv}

Let $X$ be an algebraic variety. Consider a regular action $\GG_a\times X \to X$ of the additive group $\GG_a$ of the ground field $\KK$. The image of $\GG_a$ in the automorphism group $\Aut(X)$ is a \emph{$\GG_a$-subgroup} in $\Aut(X)$. 

We let $\SAut(X)$ denote the subgroup of~$\Aut(X)$ generated by all $\GG_a$-subgroups. Notice that $\SAut(X)$ is a normal subgroup in~$\Aut(X)$.

Denote by $X_{\reg}$ the smooth locus of a variety $X$. We say that a point $x\in X_{\reg}$ is {\it flexible} if the tangent space $T_xX$ is spanned by the tangent vectors to orbits passing through $x$ over all $\GG_a$-subgroups in $\Aut(X)$. The variety $X$ is {\it flexible} if every point $x\in X_{\reg}$ is. Clearly, $X$ is flexible if one point of $X_{\reg}$ is flexible and the group $\Aut(X)$ acts transitively on $X_{\reg}$. Many examples of flexible varieties are given, e.g., in \cite{AFKKZ12013, AFKKZ22013, AKZ2012, APS2014}.

The following result is proved in~\cite[Theorem~0.1]{AFKKZ12013} for affine varieties and is generalized in \cite[Theorem~2]{APS2014} and \cite[Theorem~1.11]{FKZ2016} to the quasi-affine case.  

\begin{theorem} 
\label{main}
Let $X$ be an irreducible quasi-affine variety of dimension $\ge 2$. The following conditions are equivalent.
\begin{enumerate}
\item
The group $\SAut(X)$ acts transitively on $X_{\reg}$.
\item
The group $\SAut(X)$ acts infinitely transitively on $X_{\reg}$.
\item
The variety $X$ is flexible.
\end{enumerate}
\end{theorem}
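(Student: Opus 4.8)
The plan is to prove the cycle of implications $(2)\Rightarrow(1)\Rightarrow(3)\Rightarrow(2)$. The implication $(2)\Rightarrow(1)$ is immediate, since infinite transitivity subsumes the case $m=1$. For $(1)\Rightarrow(3)$ I would argue via an invariant distribution. Write $V_x\subseteq T_xX$ for the span of the velocity vectors at $x$ of all $\GG_a$-subgroups; the function $x\mapsto\dim V_x$ is lower semicontinuous and $\Aut(X)$-invariant, because an automorphism conjugates $\GG_a$-subgroups to $\GG_a$-subgroups and its differential carries the corresponding velocities to velocities. Under $(1)$ this function is therefore constant on $X_{\reg}$, say equal to $d$. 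The key observation is that the flow $\phi_t$ of any $\GG_a$-subgroup preserves the rank-$d$ distribution $x\mapsto V_x$: conjugating a $\GG_a$-subgroup by $\phi_t$ yields another $\GG_a$-subgroup, which gives $d\phi_t(V_x)=V_{\phi_t(x)}$. Since each generating vector field is itself tangent to $V$, the Lie derivative along it of a section of $V$ stays in $V$, so all iterated brackets of the generating fields remain tangent to $V$. Consequently the tangent space to the $\SAut(X)$-orbit of $x$ is contained in $V_x$, and the orbit has dimension at most $d$. Transitivity forces the orbit to be dense in $X_{\reg}$, whence $d=\dim X$ and $X$ is flexible.

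The remaining implication $(3)\Rightarrow(2)$ carries the real content. First I would recover transitivity directly from flexibility: choosing $\GG_a$-subgroups $U_1,\dots,U_k$ whose velocities at $x$ span $T_xX$, the map $U_1\times\cdots\times U_k\to X$, $(u_1,\dots,u_k)\mapsto u_1\cdots u_k\cdot x$, has surjective differential at the origin, so its image, and hence the $\SAut(X)$-orbit of $x$, contains a dense open neighbourhood of $x$; the orbits are thus open and partition the irreducible variety $X_{\reg}$, forcing a single orbit. Infinite transitivity then follows by induction on $m$. Given distinct $x_1,\dots,x_m$ and distinct $y_1,\dots,y_m$, apply $(m-1)$-transitivity to send $x_1,\dots,x_{m-1}$ to $y_1,\dots,y_{m-1}$; if $p$ denotes the image of $x_m$, it remains to find an automorphism fixing $y_1,\dots,y_{m-1}$ and sending $p$ to $y_m$. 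Thus everything reduces to showing that the stabilizer in $\SAut(X)$ of any finite tuple acts transitively on the complement of that tuple in $X_{\reg}$.

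The mechanism for this, and the \emph{main obstacle}, is the construction of automorphisms that move one point while fixing several others. Given a locally nilpotent derivation $\partial$ generating a $\GG_a$-subgroup and an invariant function $f\in\ker\partial$ with $f(q_i)=0$ for the points $q_i$ to be fixed and $f(p)\neq0$, the product $f\partial$ is again a locally nilpotent derivation whose flow fixes the $q_i$ and moves $p$ in the direction $f(p)\,\partial(p)$. Ranging over a family of $\partial$'s whose velocities span $T_pX$ lets one move $p$ locally in every direction while fixing the $q_i$, and combining this with the open-orbit argument applied inside the stabilizer and with connectedness of the complement lets one send $p$ anywhere. The difficulty is that such an invariant $f$ exists only when $p$ and the $q_i$ lie in distinct fibres of the quotient $X\to X/\!\!/\GG_a$; guaranteeing this requires a family of $\GG_a$-subgroups rich enough that, after a preliminary automorphism putting the points in general position, some member separates $p$ from all the $q_i$ by its invariants. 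Closing the chosen family under conjugation and under multiplication by invariant functions (a saturated, replete collection of derivations) supplies exactly this flexibility, and verifying that such a collection retains the spanning property at every point is where the technical heart of the argument lies.

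Finally, I would address the passage from the affine case to the quasi-affine setting of the statement. Here one works with the algebra $\KK[X]$ of regular functions, which need not be finitely generated, and reduces the separation and extension statements for locally nilpotent derivations to an affine situation, using that $\GG_a$-actions still correspond to locally nilpotent derivations of $\KK[X]$ and that the required invariants persist; this is precisely the content of the generalizations \cite{APS2014,FKZ2016}. Assembling the three implications yields the equivalence of $(1)$, $(2)$, and $(3)$.
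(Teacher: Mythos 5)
There is no internal proof to compare against here: the paper does not prove Theorem~\ref{main}, it imports it verbatim from the literature, citing \cite{AFKKZ12013} for the affine case and \cite{APS2014}, \cite{FKZ2016} for the quasi-affine generalization. So your proposal has to be judged against those sources, and against the standard of being a self-contained proof.

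Judged that way, it is an accurate roadmap but not a proof, and the gap is exactly where you admit it is. The implications $(2)\Rightarrow(1)$ and $(3)\Rightarrow(1)$ are fine, and your $(1)\Rightarrow(3)$ argument is morally the right one; but note that even there you quietly assume that the $\SAut(X)$-orbit of a point is a geometric object with a well-defined dimension and tangent space controlled by velocity vectors. Since $\SAut(X)$ is not an algebraic group, this is itself a theorem (in \cite{AFKKZ12013} it is the statement that orbits of subgroups generated by connected algebraic subgroups are locally closed smooth subvarieties whose tangent spaces are spanned by velocities of the conjugated one-parameter subgroups); your Lie-derivative/bracket argument is a differential-geometric heuristic that does not replace it in the algebraic category. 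The decisive gap is $(3)\Rightarrow(2)$: you correctly describe the mechanism (replacing a locally nilpotent derivation $\partial$ by $f\partial$ with $f\in\ker\partial$ vanishing at the points to be fixed), correctly identify the obstruction (the moving point and the fixed points may lie in a common fiber of the invariants), and correctly name the remedy (saturated families of derivations), but you never carry out the construction — and that construction, together with the verification that saturation preserves the spanning property, \emph{is} the content of \cite{AFKKZ12013}. Finally, your last paragraph disposes of the quasi-affine case by invoking \cite{APS2014} and \cite{FKZ2016}, i.e.\ by citing the very statement to be proved, which makes the proposal circular as a proof of the theorem in the generality stated. In short: correct skeleton, essentially the known strategy, but the technical heart is acknowledged rather than supplied.
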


One may wonder whether flexibility is the only reason for infinite transitivity of the automorphism group. In this direction, there is the following result for quasi-affine varieties admitting a non-trivial action of a connected linear algebraic group. 

\begin{theorem} \cite[Corollary~13]{Ar22018}
\label{ctrans}
Let $X$ be an irreducible quasi-affine variety of dimension $\ge 2$. Assume that $X$ admits a non-trivial $\GG_a$- or $\GG_m$-action. Then the following conditions are equivalent.
\begin{enumerate}
\item
The group $\Aut(X)$ acts 2-transitively on $X$.
\item
The group $\Aut(X)$ acts infinitely transitively on $X$.
\item
The group $\SAut(X)$ acts transitively on $X$.
\item
The group $\SAut(X)$ acts infinitely transitively on $X$.
\end{enumerate}
\end{theorem}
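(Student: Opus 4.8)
The plan is to treat the four conditions as equivalent statements whose only substantial link is $(1)\Rightarrow(4)$; everything else is formal. Indeed $(4)\Rightarrow(2)$ and $(4)\Rightarrow(3)$ hold because $\SAut(X)\subseteq\Aut(X)$, and $(2)\Rightarrow(1)$ is immediate. Moreover, if any of the transitivity statements holds, then $\Aut(X)$ acts transitively on $X$; since automorphisms preserve the open dense smooth locus $X_{\reg}$ (recall $\operatorname{char}\KK=0$), transitivity forces $X=X_{\reg}$, i.e. $X$ is smooth. On this smooth $X=X_{\reg}$, conditions $(3)$ and $(4)$ are equivalent by Theorem~\ref{main}, which says that transitivity of $\SAut(X)$ on $X_{\reg}$ is the same as its infinite transitivity and as flexibility. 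So the whole statement reduces to the single implication that $2$-transitivity of $\Aut(X)$ implies transitivity of $\SAut(X)$ (equivalently, flexibility).

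The heart of the argument is a normal-subgroup orbit lemma. Write $A=\Aut(X)$ and $N=\SAut(X)\trianglelefteq A$, and assume $(1)$. First I would check that, \emph{provided $N\ne\{1\}$, every $N$-orbit is positive-dimensional}: some $\GG_a$-subgroup $U\subseteq N$ moves a point $x_0$, and for an arbitrary $y$ transitivity gives $g\in A$ with $gx_0=y$, whence the $\GG_a$-subgroup $gUg^{-1}\subseteq N$ moves $y$. Next, suppose $N$ had two distinct orbits $O_1\ne O_2$. Choose distinct $x_1,x_2\in O_1$ (possible as $\dim O_1\ge1$) and points $y_1\in O_1$, $y_2\in O_2$ (automatically distinct); by $2$-transitivity there is $g\in A$ with $gx_1=y_1$ and $gx_2=y_2$. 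Since $N$ is normal, $gO_1$ is again an $N$-orbit, namely the orbit of $y_1$, which is $O_1$; but then $y_2=gx_2\in gO_1=O_1$, contradicting $y_2\in O_2$. Hence $N$ has a single orbit, i.e. $\SAut(X)$ is transitive. So the only remaining task is to prove that $N=\SAut(X)$ is nontrivial.

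If $X$ admits a nontrivial $\GG_a$-action this is immediate, since the corresponding $\GG_a$-subgroup lies in $\SAut(X)$. The genuine difficulty, and the step I expect to be the main obstacle, is to produce a nontrivial $\GG_a$-subgroup out of a nontrivial $\GG_m$-action $T\cong\GG_m\hookrightarrow A$ using $2$-transitivity. Here I would argue by a dichotomy resting on the fact that a connected algebraic subgroup of $A$ containing no $\GG_a$-subgroup must be a torus: in characteristic zero a connected linear algebraic group with neither unipotent elements nor semisimple part is a torus, and every connected algebraic subgroup of $\Aut(X)$ is linear because a quasi-affine variety contains no complete curves. Thus, if some conjugate $T'=gTg^{-1}$ fails to commute with $T$, the subgroup generated by $T$ and $T'$ is non-abelian and generated by tori, so its connected algebraic closure in $A$ is not a torus and therefore contains a $\GG_a$-subgroup; this already gives $N\ne\{1\}$. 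The delicate point to be handled carefully is exactly that $\langle T,T'\rangle$ possesses a well-behaved algebraic closure inside the ind-group $\Aut(X)$, so that the ``no $\GG_a$ $\Rightarrow$ torus'' dichotomy may legitimately be applied.

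It remains to exclude the opposite alternative, that \emph{every} conjugate of $T$ commutes with $T$ and, by the same dichotomy applied to pairs of conjugates, that all conjugates of $T$ pairwise commute. In that case the subgroup $S$ generated by all conjugates of $T$ is abelian and normal in $A$, so its algebraic closure $\bar S$ is a nontrivial normal torus. Running the orbit lemma above with $\bar S$ in place of $N$ shows that $\bar S$ acts transitively, whence $X\cong\bar S/\bar S_{x}$ is an algebraic torus $\GG_m^{d}$ with $d=\dim X\ge2$. But $\Aut(\GG_m^{d})\cong\GG_m^{d}\rtimes\GL_{d}(\ZZ)$, and the stabilizer $\GL_{d}(\ZZ)$ of the unit point is countable, hence cannot act transitively on the uncountable set $\GG_m^{d}\setminus\{e\}$; so $\Aut(X)$ is not even $2$-transitive, contradicting $(1)$. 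This rules out the commuting alternative and completes the production of a nontrivial $\GG_a$-subgroup, and with it the implication $(1)\Rightarrow(4)$. As a consistency check, this excluded sub-case fits the classification in Theorem~\ref{tknop} and Remark~\ref{rem2}: every $2$-transitive action of an algebraic group listed there carries an honest $\GG_a$ (the normal factor $V=\GG_a^{n}$), so a purely toric automorphism group is indeed incompatible with $2$-transitivity.
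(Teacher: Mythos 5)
First, a point of reference: the paper does not prove this statement at all --- it is quoted verbatim from \cite[Corollary~13]{Ar22018} --- so your proposal has to stand entirely on its own. Much of it does: the formal implications, the observation that transitivity forces $X=X_{\reg}$ so that Theorem~\ref{main} gives $(c)\Leftrightarrow(d)$, and above all the normal-subgroup orbit lemma ($2$-transitivity of $\Aut(X)$ plus $\SAut(X)\neq\{1\}$ implies $\SAut(X)$ is transitive, since $\SAut(X)$ is normal) are correct; this completely settles the case where $X$ admits a non-trivial $\GG_a$-action, and it is indeed the mechanism behind the $\GG_a$-half of the cited result.

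The genuine gap is in the $\GG_m$ case, at exactly the spot you flagged and then did not close: the claim that two non-commuting $\GG_m$-subgroups $T,T'\subseteq\Aut(X)$ force the existence of a $\GG_a$-subgroup. Your mechanism --- pass to ``the connected algebraic closure'' of $\langle T,T'\rangle$ and apply the dichotomy ``connected linear group with no unipotents is a torus'' --- is not available. $\Aut(X)$ is not an algebraic group; for quasi-affine $X$ it is not even known to carry a useful ind-group structure; and even for $X$ affine, a subgroup generated by two algebraic subgroups need not lie in \emph{any} algebraic subgroup (it can be infinite-dimensional, as happens already in $\Aut(\AA^2)$), and the closure of such a subgroup in the ind-group need not be algebraic, so the dichotomy for connected algebraic groups simply cannot be invoked. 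Note the contrast with your commuting alternative, which \emph{can} be made rigorous without any closure operation: a product of finitely many pairwise commuting $\GG_m$-subgroups is an honest algebraic torus in $\Aut(X)$ (the kernel of $T_1\times\dots\times T_k\to\Aut(X)$ is closed and the image acts algebraically), and faithfulness bounds the dimension by $\dim X$, so the directed union stabilizes. The missing non-commuting step is not a technicality: producing a $\GG_a$-action from a $\GG_m$-action under $2$-transitivity is precisely the hard content of \cite{Ar22018}, where it is handled by different tools (the machinery of algebraically generated subgroups from \cite{AFKKZ12013}: local closedness of orbits, covering $X$ by finite products of conjugate tori, etc.), not by taking closures. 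A minor, repairable defect at the end: the countability argument ($\GL_d(\ZZ)$ countable versus $\GG_m^d$ uncountable) fails over countable algebraically closed fields such as $\overline{\QQ}$, which the paper's conventions allow; once $X\cong\GG_m^d$ you should instead invoke Theorem~\ref{theoif} (non-constant invertible functions force $\theta(X)=1$), or note that the stabilizer of the identity consists of group automorphisms, which preserve torsion points.
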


It is an interesting problem to prove Theorem~\ref{ctrans} without the assumption that $X$ admits a non-trivial $\GG_a$- or $\GG_m$-action~\cite[Conjecture~16]{Ar22018}.

\section{Transitivity degree and invertible functions}
\label{invfun}

The following result shows that the existense of a non-constant invertible regular function is an obstacle to multiple transitivity. 

\begin{theorem}
\label{theoif}
Let $X$ be an irreducible homogeneous algebraic variety with a non-constant invertible regular function. Then $\theta(X) = 1$.
\end{theorem}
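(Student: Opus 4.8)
The plan is to exhibit an $\Aut(X)$-invariant of pairs of points that takes at least two values, which immediately forces $\theta(X)=1$: the variety is transitive because it is homogeneous, so $\theta(X)\ge 1$ and it suffices to rule out $2$-transitivity.

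First I would recall that, by a classical theorem of Rosenlicht on units, for an irreducible variety the group $U:=\KK[X]^\times/\KK^\times$ is free abelian of finite rank $r$; here $r\ge 1$ precisely because $X$ carries a non-constant invertible function. For an ordered pair of points $x,y\in X$ I define a homomorphism $\chi_{x,y}\colon U\to\GG_m$ by $\chi_{x,y}([g])=g(x)/g(y)$, which makes sense since every $g\in\KK[X]^\times$ is nowhere vanishing and which is well defined because the constants cancel. To the pair I attach the subgroup $S_{x,y}:=\chi_{x,y}(U)\subseteq\GG_m$. Being the image of a homomorphism from a group generated by $r$ elements, $S_{x,y}$ is finitely generated of rank at most $r$.

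The key step, and the \emph{main obstacle}, is to check that $S_{x,y}$ is genuinely $\Aut(X)$-invariant. This is not obvious, since $\Aut(X)$ is typically far from an algebraic group and no single unit need be semi-invariant under it; what rescues the situation is that $\Aut(X)$ acts on the whole lattice $U$ by group automorphisms via $\phi\cdot[g]=[g\circ\phi]$. A direct computation gives $\chi_{\phi x,\phi y}=\chi_{x,y}\circ(\phi\cdot)$, and since $\phi\cdot$ is an automorphism of $U$, hence surjective, the image is unchanged: $S_{\phi x,\phi y}=S_{x,y}$. Thus the assignment $(x,y)\mapsto S_{x,y}$ is constant along $\Aut(X)$-orbits of pairs.

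Finally I would derive the contradiction. If the action were $2$-transitive, then $S_{x,y}$ would equal one fixed subgroup $S\subseteq\GG_m$ for every pair of distinct points, so $S$ would have finite rank $\le r$. On the other hand, choosing a non-constant unit $f$, its image $f(X)$ is dense and constructible in the curve $\GG_m$, hence cofinite in $\KK^\times$; since $\KK^\times$ has infinite torsion-free rank (already the rational primes are multiplicatively independent in $\QQ^\times\subseteq\KK^\times$), the ratios $f(x)/f(y)$ generate a subgroup of infinite rank, all of which lie in $S$, a contradiction. Hence the action is not $2$-transitive and $\theta(X)=1$. I would emphasize that this final count concerns only ranks of finitely generated abelian groups and is therefore insensitive to the cardinality of $\KK$; this robustness is essential, since $\KK$ may well be countable (for instance $\overline{\QQ}$), which rules out any naive argument that a single countable orbit cannot exhaust an ``uncountable'' image.
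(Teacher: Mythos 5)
Your proof is correct, and it shares the paper's two pillars: Rosenlicht's theorem that $L=\KK[X]^\times/\KK^\times$ is free abelian of finite rank with $\Aut(X)$ acting on it, and the fact that a non-constant unit, being dominant onto $\GG_m$, has cofinite image. But you organize the argument differently and your concluding lemma is genuinely different. The paper fixes a point $x$, chooses basis units normalized so that $f_i(x)=1$, and explicitly computes the action of the stabilizer $\Aut(X)_x$ on them; the normalization is exactly what kills the scalar ambiguity ($\lambda_i=1$), and the contradiction is that the value set $\{f_i(\alpha y)\}_{\alpha\in\Aut(X)_x}$, which must be cofinite since the stabilizer acts transitively on $X\setminus\{x\}$, is trapped in the finitely generated multiplicative group $\langle f_1(y),\dots,f_m(y)\rangle$; that such a group has infinite complement in $\KK$ is then proved by field theory (it lies in $\QQ(f_1(y),\dots,f_m(y))$, whose algebraic closure is an infinite extension of it). Your ratio subgroup $S_{x,y}$ is the base-point-free version of the same mechanism: passing to ratios $g(x)/g(y)$ makes the scalar ambiguity disappear with no normalization, and invariance under all of $\Aut(X)$ --- not just a stabilizer --- falls out in one line, which is a cleaner formulation. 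Your endgame is also more elementary: a rank count (the primes are multiplicatively independent in $\QQ^\times\subseteq\KK^\times$) instead of the paper's field-theoretic lemma, and, as you say, both routes are insensitive to $\KK$ being countable. The one step you should spell out is the jump from ``$f(X)$ is cofinite in $\KK^\times$'' to ``the ratios $f(x)/f(y)$ generate infinite rank'': as written it is asserted rather than argued. It is easily filled --- for any $c\in\KK^\times\setminus\{1\}$ the sets $f(X)$ and $c\,f(X)$ are both cofinite, hence meet, so $c=f(x)/f(y)$ for some $x\neq y$; thus under $2$-transitivity $S$ would contain every prime, contradicting its finite generation.
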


\begin{proof}
Assume conversely that $\theta(X) \geq 2$ and fix a point $x \in X$. Then the stabilizer $\Aut(X)_x$ of the point $x$ in the group $\Aut(X)$ acts transitively on $Y = X \setminus \{ x \}$.

Denote by $L$ the factor group $\KK[X]^\times / \KK^\times$ of the multiplicative group $\KK[X]^\times$ of invertible regular functions on $X$ modulo non-zero scalars. By \cite[Proposition~1.3]{KKV1989} $L$ is a free abelian group of a finite rank $m$. Fix functions $f_1, \dots, f_m \in \KK[X]^\times$ whose projections form a basis of $L$ and such that $f_1(x) = \dots = f_m(x) = 1$.

Let $\alpha \in \Aut(X)_x$. The dual automorphism $\alpha^* : \KK[X] \to \KK[X]$ induces a lattice isomorphism $\tilde{\alpha} \in \Aut(L) = \GL_m(\ZZ)$. So if $\tilde{\alpha} = (a_{i j})_{i, j = 1}^m$, then we have
$$
g_i := \alpha^*(f_i) = \lambda_i \prod_{j = 1}^m f_j^{a_{j i}}
$$
for some $\lambda_i \in \KK^\times$. Recall that
$$
g_i(x) = f_i(\alpha x) = f_i(x) = 1,
$$
but
$$
g_i(x) = \lambda_i \prod_{j = 1}^m f_j^{a_{j i}}(x) = \lambda_i.
$$
Therefore, $\lambda_i = 1$ for every $i$.

Let $y \in Y$ and denote $t_i = f_i(y)$. We have
$$
f_i(\alpha y) = g_i(y) = \prod_{j = 1}^m t_j^{a_{j i}}.
$$
Since $f_i$ is a non-constant regular function on $Y$, the morphism $f_i : Y \to \AA^1$ is dominant and $f_i(Y)$ is an open subset of $\AA^1$. By our assumption $Y = \Aut(X)_x \cdot y$, hence for each $i$ the subset $\{ \prod_{j = 1}^m t_j^{a_{j i}} \}_{\alpha \in \Aut(X)_x} $ is open in $\AA^1$. It contradicts the following lemma.
\end{proof}

\begin{lemma}
Let $\KK$ be an algebraically closed field of characteristic zero, $t_1, \dots, t_m \in \KK^\times$, and $A = \{ \prod_{i = 1}^m t_i^{a_i} \ | \ a_1, \dots, a_m \in \ZZ \}$. Then the complement $B = \KK \setminus A$ is infinite.
\end{lemma}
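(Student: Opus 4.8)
The plan is to show that the multiplicative subgroup $A = \{ \prod_{i=1}^m t_i^{a_i} \mid a_i \in \ZZ \}$ of $\KK^\times$ is a countable set, while $\KK$ is uncountable, so that $B = \KK \setminus A$ must be infinite. The first step is to pin down the cardinality of $A$: it is the image of the group homomorphism $\ZZ^m \to \KK^\times$ sending $(a_1,\dots,a_m)$ to $\prod_i t_i^{a_i}$, hence $A$ is a quotient of $\ZZ^m$ and is therefore at most countable. The second step is to argue that $\KK$ itself is uncountable. Here the obstacle is that an arbitrary algebraically closed field of characteristic zero need \emph{not} be uncountable --- the prime example being $\closure{\QQ}$, the algebraic closure of $\QQ$, which is countable. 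So a naive cardinality count fails in general, and this is the step I expect to be the main difficulty: the statement as phrased must hold even over countable fields.

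To repair this, I would replace the crude cardinality argument with an algebraic one that works over any such $\KK$. The key observation is that $A$ is contained in the multiplicative group generated by the finitely many elements $t_1,\dots,t_m$, so $A$ lies inside a \emph{finitely generated} subfield $\KK_0 \subseteq \KK$; indeed $A \subseteq \KK_0^\times$ where $\KK_0 = \QQ(t_1,\dots,t_m)$. The plan is then to exhibit infinitely many elements of $\KK$ outside $A$ by working relative to $\KK_0$. If $\KK_0 \ne \KK$, one can pick any element transcendental over $\KK_0$, or more simply use that $\KK$ is algebraically closed and hence infinite-dimensional over $\KK_0$ whenever $\KK \ne \KK_0$, producing infinitely many elements not lying in $\KK_0 \supseteq A$. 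The genuinely delicate case is when $\KK = \KK_0$ is itself finitely generated, where I must show directly that a finitely generated field of characteristic zero cannot be exhausted by a finitely generated multiplicative subgroup together with $0$.

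For that remaining case I would invoke a height or valuation argument. Since $\KK_0 = \QQ(t_1,\dots,t_m)$ is finitely generated over $\QQ$, the multiplicative group $\KK_0^\times$ is \emph{not} finitely generated modulo torsion --- for instance, by Dirichlet-type finiteness only the $S$-units for a fixed finite set $S$ of places form a finitely generated group, whereas $A$, being generated by finitely many elements, can involve only finitely many places in its factorizations. Concretely, choosing a prime $p$ (equivalently a discrete valuation $v$ on $\KK_0$) not dividing any $t_i$, every element of $A$ has $v$-valuation zero, so any element of $\KK_0$ with $v(u) \ne 0$ lies in $B$; there are infinitely many such $u$ obtained by varying the prime, and even a single nontrivial valuation yields infinitely many elements of distinct valuation, all in $B$. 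The main obstacle throughout is thus recognizing that cardinality alone is insufficient over countable fields and that the correct mechanism is the finiteness of the support (in places) of the finitely generated group $A$, which prevents $A \cup \{0\}$ from covering the infinitely many valuation classes present in $\KK$.
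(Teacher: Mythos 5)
Your proposal is correct, but it takes a genuinely different route from the paper's proof, and one structural choice deserves comment. Both arguments begin with the same reduction: $A$ is contained in the finitely generated subfield $\KK_0=\QQ(t_1,\dots,t_m)$ (the paper's $\FF$). The paper then stays purely field-theoretic: $\KK$ contains $\overline{\KK_0}$, and the algebraic closure of a finitely generated field of characteristic zero is an infinite extension of it, so $\overline{\KK_0}\setminus\KK_0$ alone supplies infinitely many elements of $B$, with no case analysis. You instead split into $\KK\neq\KK_0$ and $\KK=\KK_0$, and the case you flag as the main difficulty, $\KK=\KK_0$, is in fact vacuous: a field finitely generated over $\QQ$ is never algebraically closed --- this is precisely the fact the paper exploits, and observing it would have dissolved your case distinction. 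Nevertheless, the valuation argument you propose there is sound for any finitely generated field of characteristic zero, and it actually proves the lemma outright, without cases: choose a discrete valuation $v$ of $\KK_0$ at which all $t_i$ are units (only finitely many places fail this), so that $v$ vanishes identically on $A$, while the powers of a uniformizer give infinitely many elements of $\KK_0\setminus A\subseteq B$. This buys something the paper's proof does not: it nowhere uses that $\KK$ is algebraically closed, so it establishes the statement over an arbitrary field of characteristic zero, and it exhibits infinitely many elements of the complement already inside $\KK_0$; the price is an appeal to the existence of suitable places ($S$-unit-type finiteness), whereas the paper needs only the primitive element theorem and the infinitude of $\overline{\FF}$ over $\FF$. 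Finally, two small repairs in your case $\KK\neq\KK_0$: an element of $\KK$ transcendental over $\KK_0$ need not exist (take $\KK=\overline{\KK_0}$), and infinite-dimensionality of $\KK$ over $\KK_0$ requires Artin--Schreier plus the fact that finitely generated fields are not real closed; the clean justification is simply that for any $a\in\KK\setminus\KK_0$ the translates $a+c$, $c\in\KK_0$, already form an infinite subset of $\KK\setminus\KK_0\subseteq B$.
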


\begin{proof}
Let $\FF$ be the subfield $\QQ(t_1, \dots, t_m) \subseteq \KK$. By the primitive element theorem, we have $\FF = \QQ(x_1, \dots, x_k)(u)$ for some $k \geq 0$ and $u, x_1, \dots, x_k \in \KK$, where $u$ is algebraic over $\QQ(x_1, \dots, x_k)$ and $\QQ(x_1, \dots, x_k)$ is a purely transcendental extension of $\QQ$ of transcendence degree $k$.

We have $A \subseteq \FF$, so $B \supseteq \KK \setminus \FF$. Since $\KK$ is algebraically closed, the algebraic closure $\overline{\FF}$ of $\FF$ is contained in $\KK$. But the algebraic closure of $\QQ(x_1, \dots, x_k)$ is not a finite extension of $\QQ(x_1, \dots, x_k)$, so $\overline{\FF} \setminus \FF$ is infinite. Since $B \supseteq \KK \setminus \FF \supseteq \overline{\FF} \setminus \FF$, the set $B$ is also infinite.
\end{proof}

For the next result we need the fact that regular functions separate points on an irreducible algebraic variety $X$ if and only if $X$ is quasi-affine. This is proved in~\cite[Theorem~1.2~(iii) and Theorem~2.1~(4)]{Gr1997} in the case when $X$ is a homogeneous space. We did not find a reference to this fact in complete generality and give a short proof in the Appendix below. 

\begin{proposition}
\label{propaa}
Let $X$ be an irreducible homogeneous algebraic variety. Assume that $X$ is not quasi-affine and $\KK[X]\ne\KK$. Then $\theta(X) = 1$.
\end{proposition}

\begin{proof}
By Proposition~\ref{propqa}, if $X$ is not quasi-affine then there are points $x,y\in X$ such that $f(x)=f(y)$ for any $f\in\KK[X]$. 

On the other hand, the condition $\KK[X]\ne\KK$ implies that there is a point $z\in X$ such that $h(x)\ne h(z)$ for some $h\in\KK[X]$. We conclude that the pair $(x,y)$ can not be moved to the pair $(x,z)$ by an automorphism of $X$, so the action of $\Aut(X)$ on $X$ is not $2$-transitive.
\end{proof}

\begin{corollary}
Let $X$ and $Y$ be irreducible homogeneous algebraic varieties of positive dimension. Assume that $X$ is quasi-affine and $\KK[Y]=\KK$. Then $\theta(X\times Y) = 1$.
\end{corollary}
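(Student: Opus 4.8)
The plan is to deduce the statement from Proposition~\ref{propaa} applied to the product $Z := X \times Y$. For this I need to verify four properties of $Z$: that it is irreducible, that it is homogeneous, that $\KK[Z] \ne \KK$, and that $Z$ is not quasi-affine. The first two are immediate. A product of irreducible varieties over an algebraically closed field is irreducible, so $Z$ is irreducible; and, as recalled in Section~\ref{prel}, the group $\Aut(X)\times\Aut(Y)$ acts transitively on $Z$, so $Z$ is homogeneous.

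For the condition $\KK[Z] \ne \KK$, I would first argue that $X$ being quasi-affine of positive dimension forces $\KK[X] \ne \KK$. Indeed, $\dim X \ge 1$ produces two distinct points of $X$, and on a quasi-affine variety regular functions separate points by the criterion Proposition~\ref{propqa}, so some $f \in \KK[X]$ is non-constant. Pulling $f$ back along the projection $Z \to X$ then yields a non-constant element of $\KK[Z]$.

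The crux of the argument—and the only place where the hypothesis $\KK[Y]=\KK$ is used—is showing that $Z$ is not quasi-affine. Here I would fix a point $x_0 \in X$ together with two distinct points $y_1 \ne y_2$ in $Y$, which exist since $\dim Y \ge 1$. The restriction of any $h \in \KK[Z]$ to the closed subvariety $\{x_0\}\times Y \cong Y$ is a regular function on $Y$, hence constant because $\KK[Y]=\KK$; in particular $h(x_0,y_1)=h(x_0,y_2)$. Thus regular functions on $Z$ fail to separate the distinct points $(x_0,y_1)$ and $(x_0,y_2)$, and by Proposition~\ref{propqa} the variety $Z$ is not quasi-affine.

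Having checked that $Z=X\times Y$ is irreducible, homogeneous, not quasi-affine, and satisfies $\KK[Z]\ne\KK$, I would apply Proposition~\ref{propaa} to conclude $\theta(X\times Y)=1$. I do not anticipate a serious obstacle: the proof is a direct reduction to the already-established Proposition~\ref{propaa}, and the only mild care is needed in the non-quasi-affineness step, where one must use that restricting a global regular function to the closed fiber $\{x_0\}\times Y$ keeps it regular, so that $\KK[Y]=\KK$ can be invoked there.
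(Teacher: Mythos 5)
Your proof is correct and takes exactly the route the paper intends: the corollary appears there without proof as an immediate consequence of Proposition~\ref{propaa}, and your verification that $X\times Y$ is irreducible, homogeneous, carries a non-constant regular function (pulled back from the quasi-affine factor $X$, which has one by Proposition~\ref{propqa}), and is not quasi-affine (since every $h\in\KK[X\times Y]$ is constant on $\{x_0\}\times Y$, so regular functions fail to separate points, again by Proposition~\ref{propqa}) supplies precisely the intended reduction. No gaps.
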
 


\section{Homogeneous toric varieties} 
\label{subsec_toric}

First of all let us briefly recall basic facts on toric varieties and related combinatorics. For a more detailed description see, for example, \cite{CLS2011, Fu1993, Od1988}.

Let $T = \GG_m^n$ be an algebraic torus of rank $n$. A normal irreducible variety $X$ is called \emph{toric}, if there is a faithful action of $T$ on $X$ with an open orbit.

Let $N = \ZZ^n$ be the lattice of one-parameter subgroups of $T$ and $M = \Hom_\ZZ(N, \ZZ) \simeq \ZZ^n$ be the character lattice of $T$. Denote by $N_{\QQ}$ and $M_{\QQ}$ the associated $\QQ$-vector spaces $N \otimes_\ZZ \QQ$ and $M \otimes_\ZZ \QQ$. Let $\pairing{\cdot}{\cdot} : M_\QQ \times N_\QQ \to \QQ$ be the pairing of dual vector spaces $N_\QQ$ and $M_\QQ$.

A \emph{cone} in $N$ is a convex polyhedral cone in $N_{\QQ}$. A cone is called \emph{strongly convex} if it contains no nonzero linear subspace. Recall that the \textit{dual cone} $\sigma^\vee$ to a cone $\sigma$ in $N$ is defined by
$$
\sigma^\vee = \{ u \in M_\QQ \ | \ \pairing{u}{v} \geq 0 \ \forall v \in \sigma \}.
$$
Given a strongly convex cone $\sigma$ in $N$, one can consider the finitely generated $\KK$-algebra $\KK[\sigma^\vee \cap M]$ graded by the semigroup of lattice points of the cone $\sigma^\vee$:
$$
\KK[\sigma^\vee \cap M] = \bigoplus_{u \in \sigma^\vee} \KK \chi^u,
$$
where $\chi^u \cdot \chi^{u'} = \chi^{u + u'}$. Denote by $X(\sigma)$ the corresponding affine variety $\Spec(\KK[\sigma^\vee \cap M])$. It is well-known that given an affine variety $X$ there is a bijection between faithful $T$-actions on $X$ and effective $M$-gradings on $\KK[X]$. The aforementioned $\sigma^\vee \cap M$-grading on $\KK[\sigma^\vee \cap M]$ corresponds to a faithful $T$-action on $X(\sigma)$ with an open orbit. Therefore, $X(\sigma)$ is a toric variety. Moreover, every affine toric variety $X$ arises this way.

A \textit{fan} in $N$ is a finite collection $\Sigma$ of strongly convex cones in $N$ such that for all $\sigma_1, \sigma_2 \in \Sigma$ the intersection $\sigma_1 \cap \sigma_2$ is a face of both $\sigma_1$ and $\sigma_2$, and every face of $\sigma_1$ is an element of $\Sigma$. There is a one-to-one correspondence between toric varieties with an acting torus $T$ and fans in~$N$. Namely, given a fan $\Sigma$ in $N$ the corresponding toric variety $X(\Sigma)$ is a union of affine charts $X(\sigma), \sigma \in \Sigma$, where any two charts $X(\sigma_1)$ and $X(\sigma_2)$ are glued along their open subset $X(\sigma_1 \cap \sigma_2)$, and every toric variety arises this way.

Denote by $|\Sigma|$ the \emph{support} of a fan $\Sigma$ in $N$:
$$
|\Sigma| = \bigcup_{\sigma \in \Sigma} \sigma.
$$
A fan $\Sigma$ in $N$ is \textit{complete} if $|\Sigma| = N_\QQ$. The corresponding toric variety $X(\Sigma)$ is a complete variety if and only if the fan $\Sigma$ is complete.

\begin{example}
If $n = 1$, then every strongly convex cone in $N = \ZZ$ is either $\QQ_{\ge 0}, \QQ_{\le 0}$, or~$\{0\}$, and any fan should be built from them. It follows that there are only 3 toric curves: $\KK^\times, \AA^1$, and $\PP^1$.
\end{example}

One-dimensional faces of a cone in $N$ are called \textit{rays}. Denote by $\Sigma(1)$ the set of rays of all cones in $\Sigma$. There is a one-to-one correspondence between cones $\sigma \in \Sigma$ and $T$-orbits $O(\sigma)$ in $X$ such that $\dim O(\sigma) = n - \dim \langle \sigma \rangle_\QQ$. In particular, each ray $\rho \in \Sigma(1)$ corresponds to a prime $T$-invariant Weil divisor $D_\rho = \closure{O(\rho)}$ on $X$.

\medskip

Assume that there is another fan $\Sigma'$ in a lattice $N'$ and consider the corresponding toric variety $X(\Sigma')$ with the acting torus $T'$. Take a homomorphism $\phi : N \to N'$ with the following property: for every $\sigma \in \Sigma$ there is $\sigma' \in \Sigma'$ such that $\phi_\QQ(\sigma) \subseteq \sigma'$, where $\phi_\QQ : N_\QQ \to N'_\QQ$ is the linear extension of $\phi$. From the fact that $\phi(\sigma) \subseteq \sigma'$ it follows that the dual homomorphism $\phi^* : M' \to M$ maps $(\sigma')^\vee$ to $\sigma^\vee$. It means that there is a homomorphism $\KK[(\sigma')^\vee] \to \KK[\sigma^\vee]$ and the induced morphism $X(\sigma) \to X(\sigma')$. These morphisms patch together to a morphism $\phi : X(\Sigma) \to X(\Sigma')$ which is called \emph{toric}. Toric morphisms are characterized by the following property: if we identify the open torus orbits in $X(\Sigma)$ and $X(\Sigma')$ with tori $T$ and $T'$ correspondingly, then $\phi(T) \subseteq T'$ and $\phi |_T : T \to T'$ is a homomorphism of algebraic groups.

\begin{example}
\label{seeex}
Let $X(\Sigma)$ be a quasi-affine toric variety. We claim that there is an affine toric variety $X(\omega)$ such that there is an open toric embedding $X(\Sigma) \subseteq X(\omega)$ and the codimension of $X(\omega) \setminus X(\Sigma)$ in $X(\omega)$ is greater than or equal to 2. It means that the fan $\Sigma$ consists of some faces of the cone $\omega$ and $\Sigma(1) = \omega(1)$.

Indeed, by~\cite[Theorem~1.6]{PV1994} the variety $X(\Sigma)$ can be embedded as an open toric subset in an affine toric variety $X(\omega')$. Since the closures of $T$-invariant prime divisors on $X(\Sigma)$ are $T$-invariant prime divisors on $X(\omega')$, the set $\Sigma(1)$ is a subset of $\omega'(1)$. Let $\omega$ be the subcone in $\omega'$ generated by all rays in $\Sigma(1)$. Every cone in $\Sigma$ is a face of $\omega'$ and is generated by some rays in $\Sigma(1)$. So every cone in $\Sigma$ is a face of $\omega$ and the open embedding $X(\Sigma) \subseteq X(\omega)$ is the desired one. 
\end{example}

A toric variety $X = X(\Sigma)$ is called \emph{degenerate} if there exists a non-constant invertible regular function on $X$.  By~\cite[Proposition~3.3.9]{CLS2011}, a toric variety $X$ is degenerate if and only if it is isomorphic to the direct product $Y \times \KK^\times$ for some toric variety $Y$ or, equivalently, the rays in $\Sigma(1)$ do not span $N_{\QQ}$. It implies
that for any $X$ there is a non-degenerate toric variety $X'$ such that $X = X' \times (\KK^\times)^k$ for some $k \ge 0$. In terms of the fan $\Sigma$ it means that $\dim \langle |\Sigma| \rangle_\QQ = n - k$ and $X'$ is given by the fan $\Sigma$ considered in the lattice generated by $|\Sigma| \cap N$. By Theorem~\ref{theoif}, for any degenerate homogeneous toric variety $X$ we have $\theta(X) = 1$.

\medskip

Finally, we recall a canonical quotient realization of toric varieties, see e.g. \cite{ADHL2015, Cox1995}. Let $X = X(\Sigma)$ be a non-degenerate toric variety. Denote $\Sigma(1) = \{\rho_1, \dots, \rho_m\}$ and $D_{\rho_i} = D_i$. For each $i = 1, \dots, m$ let $p_i$ be the generator of the semigroup $\rho_i \cap N$. The divisor class group $\Cl(X)$ of $X$ is generated by classes $[D_1], \dots, [D_m]$. Moreover, there is a short exact sequence
$$
\begin{tikzcd}
0 \arrow[r] & M \arrow[r] & \ZZ^m \arrow[r] & \Cl(X) \arrow[r] & 0
\end{tikzcd}
$$
where the map $M \to \ZZ^m$ is given by $u \mapsto (\pairing{u}{p_1}, \dots, \pairing{u}{p_m})$ and the map $\ZZ^m \to \Cl(X)$ is given by $(a_1, \dots, a_m) \mapsto a_1 [D_1] + \dots + a_m [D_m]$. Denote by $G$ the quasitorus $\Hom_{\ZZ}(\Cl(X), \GG_m)$. Applying the $\Hom_\ZZ(-, \GG_m)$-functor to the short exact sequence, we obtain an inclusion $G \hookrightarrow (\KK^\times)^m$.

Consider a $\Cl(X)$-graded homogeneous coordinate ring or the Cox ring $R(X) = \KK[x_1, \dots, x_m]$ with $\deg(x_i) = [D_i]$. The $\Cl(X)$-grading on $R(X)$ gives rise to an action of $G$ on $\KK^m = \Spec R(X)$. Denote by $Z$ the closed subset of $\KK^m$ defined by equations
$$
\prod_{\rho_l \not\in \sigma(1)} x_l = 0 \quad \text{for all} \quad \sigma \in \Sigma.
$$
The subset $Z$ is invariant under the action of the group $G$, thus there is an action of $G$ on the variety $\hat{X} = \KK^m \setminus Z$. The variety $X$ is isomorphic to the good categorical quotient $\hat{X} /\!/ G$ and the isomorphism $X \simeq \hat{X} /\!/ G$ is toric: $T \simeq (\KK^{\times})^m / G$.

\smallskip

Let us consider the problem of description of homogeneous toric varieties and computation of their transitivity degree. It is well known (see e.g. \cite[Theorem~3.9]{Ba2013}) that a complete toric variety is homogeneous if and only if  $X$ is isomorphic to the product of projective spaces $\PP^{n_1} \times \dots \times \PP^{n_k}$ for some positive integers $n_1, \ldots, n_k$. Due to Theorem~\ref{tknop}, we have $\theta(X) \ge 2$ if and only if $k = 1$, in which case $X = \PP^n$, so $\theta(\PP^n) = 2$ if $n\ge 2$ and $\theta(\PP^1) = 3$.

Every smooth affine toric variety is isomorphic to the product $\AA^n\times (\KK^\times)^k$. All these varieties are homogeneous and we already know their transitivity degree. 

At the same time, the class of quasi-affine smooth toric varieties is quite rich. From~\cite{AKZ2012, FKZ2016} we deduce the following result.

\begin{theorem}
\label{toric}
\begin{enumerate}
Let $X$ be a quasi-affine toric variety.
\item
If $X$ is smooth then $X$ is homogeneous.
\item
If $X$ is non-degenerate then $X$ is flexible.
\item
If $X$ is smooth, non-degenerate, and $\dim X \ge 2$, then $\theta(X) = \infty$.
\end{enumerate}
\end{theorem}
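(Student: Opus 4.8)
The plan is to regard part~(b) as the core of the statement and to obtain parts~(a) and~(c) from it together with Theorem~\ref{main}. For (c), once (b) is known the argument is immediate: if $X$ is smooth then $X_{\reg}=X$, so flexibility of $X$ together with Theorem~\ref{main} gives that $\SAut(X)$, and hence $\Aut(X)$, acts infinitely transitively on $X$, whence $\theta(X)=\infty$. For (a) I would use the splitting $X\simeq X'\times(\KK^\times)^k$ with $X'$ non-degenerate recalled above; here $X'$ is again smooth and quasi-affine, so for $\dim X'\ge 2$ part~(b) and Theorem~\ref{main} make $\SAut(X')$ transitive on $X'_{\reg}=X'$, while the cases $\dim X'\le 1$ reduce to $X'=\AA^1$ or a point. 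Since the torus $(\KK^\times)^k$ acts transitively on itself, $\Aut(X')\times\Aut((\KK^\times)^k)$ then acts transitively on $X$, so $X$ is homogeneous.

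It remains to prove (b). I would work through the Cox quotient $X\simeq\hat{X}/\!/G$ with Cox ring $R(X)=\KK[x_1,\dots,x_m]$, $\deg x_i=[D_i]$, recalled above, and construct a $\GG_a$-subgroup for each ray by means of Demazure roots. By Example~\ref{seeex} the fan $\Sigma$ consists of faces of a strongly convex cone $\omega$ with $\omega(1)=\Sigma(1)$, so each primitive generator $p_i$ spans an extremal ray of $\omega$ and therefore satisfies $p_i\notin\Cone(p_j:j\ne i)$. A separation argument, using primitivity of $p_i$ to normalise the value to $-1$, then yields a lattice vector $e_i\in M$ with $\pairing{e_i}{p_i}=-1$ and $\pairing{e_i}{p_j}\ge 0$ for all $j\ne i$. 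Setting $a_l=\pairing{e_i}{p_l}\ge 0$ for $l\ne i$, the exact sequence $0\to M\to\ZZ^m\to\Cl(X)\to 0$ forces $\deg\bigl(\prod_{l\ne i}x_l^{a_l}\bigr)=[D_i]$, so the derivation $\delta_i=\bigl(\prod_{l\ne i}x_l^{a_l}\bigr)\partial_{x_i}$ of $R(X)$ is locally nilpotent and homogeneous of degree $0$; being $G$-invariant it descends to a $\GG_a$-subgroup $H_i\subseteq\SAut(X)$.

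The first computation is at the base point $x_0$ of the open torus orbit, where $T_{x_0}X\cong N\otimes\KK$. A direct calculation in the torus coordinates shows that the tangent vector to the $H_i$-orbit through $x_0$ equals $p_i$. Since $X$ is non-degenerate the generators $p_1,\dots,p_m$ span $N_\QQ$, so these tangent vectors span $T_{x_0}X$ and $x_0$ is flexible. The main obstacle is to upgrade this to flexibility at an arbitrary smooth point $x$, lying in an orbit $O(\tau)$ for a smooth cone $\tau$. In the smooth chart $X(\tau)\cong\AA^{\dim\tau}\times(\KK^\times)^{n-\dim\tau}$ the tangent space $T_xX$ splits into directions normal to $O(\tau)$ and directions along $O(\tau)$. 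For the normal directions I would choose, for each ray of $\tau$, a root $e$ with $\pairing{e}{p}=0$ on the remaining rays of $\tau$, so that the associated monomial does not vanish on $O(\tau)$ and $H_e$ moves $x$ transversally; the existence of such adapted roots follows from smoothness of $\tau$ together with strong convexity of the quotient cone $\omega/\langle\tau\rangle_\QQ$. The directions along $O(\tau)$ are the genuinely delicate point: there one must combine root subgroups whose distinguished ray lies outside $\tau$ and verify that the orbit components of their tangent vectors span $N_\QQ/\langle\tau\rangle_\QQ$, which non-degeneracy guarantees at the level of lattices but which requires careful bookkeeping of which roots act nontrivially on the given orbit. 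This pointwise spanning at all smooth points is precisely the technical content established in \cite{AKZ2012, FKZ2016}; granting it, every smooth point of $X$ is flexible and (b) is proved.
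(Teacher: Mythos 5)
Your reductions for parts (a) and (c) are exactly the paper's: it too writes $X = X' \times (\KK^\times)^k$, reduces (a) to the non-degenerate case, and deduces (a) and (c) from (b) together with Theorem~\ref{main}. The gap is in your proof of (b), and it is genuine.

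The step that fails is the assertion that each $\delta_i=\bigl(\prod_{l\ne i}x_l^{a_l}\bigr)\partial_{x_i}$ ``descends to a $\GG_a$-subgroup $H_i\subseteq\SAut(X)$.'' Homogeneity of degree $0$ makes the flow on $\KK^m$ commute with $G$, but nothing makes it preserve $\hat{X}=\KK^m\setminus Z$, and without that there is no induced action on $X$ at all. Take $X=\AA^2\setminus\{0\}$: here $p_1=(1,0)$, $p_2=(0,1)$, $\Cl(X)=0$, $G$ is trivial, and $\hat{X}=X$. The vector $e_1=(-1,0)$ satisfies your conditions $\pairing{e_1}{p_1}=-1$ and $\pairing{e_1}{p_2}=0\ge 0$, yet $\delta_1=\partial_{x_1}$ integrates to the translation $(x_1,x_2)\mapsto(x_1+t,x_2)$, which does not preserve $\AA^2\setminus\{0\}$. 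This is precisely the subtlety separating the quasi-affine case from the affine one (where $Z=\emptyset$): a degree-zero locally nilpotent derivation of the Cox ring need not integrate to a $\GG_a$-action on a quasi-affine $X$. One can rescue the open-orbit computation by insisting on $\pairing{e_i}{p_j}>0$ for all $j\ne i$ (then, using that each $\rho_i$ is a cone of $\Sigma$, the flow fixes $Z$ pointwise), but your treatment of a boundary orbit $O(\tau)$ demands the opposite --- roots vanishing on the remaining rays of $\tau$ --- so the same obstruction reappears exactly at the points where you need the adapted roots, and your sketch does not resolve it.

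Nor does the closing appeal to \cite{AKZ2012, FKZ2016} fill this hole: neither reference proves ``pointwise spanning at all smooth points by Cox-ring root subgroups of a quasi-affine toric variety,'' which is what your setup requires. What they prove --- and what the paper actually uses --- are two black-box statements that apply verbatim: by \cite[Theorem~2.1]{AKZ2012} the \emph{affine} non-degenerate toric variety $Y=X(\omega)$ of Example~\ref{seeex} is flexible, and by the Gromov--Winkelmann type theorem \cite[Theorem~1.6]{FKZ2016} flexibility passes from $Y$ to the open subset $X\subseteq Y$ whose complement has codimension at least $2$. That reduction is the missing idea: you invoke Example~\ref{seeex} only to obtain extremality of the rays, whereas the paper uses it to transfer the entire problem to the affine case, thereby bypassing the descent problem into which your construction runs.
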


\begin{proof}
Every quasi-affine toric curve is either $\AA^1$ or $\KK^\times$, so the first two assertions hold in case $\dim X = 1$. Assume that $\dim X \ge 2$. Let $Y$ be an affine toric variety such that there is an open toric embedding $X \subseteq Y$ and the codimension of $Y \setminus X$ in $Y$ is greater than or equal to 2; see Example~\ref{seeex}. 

If $X$ is non-degenerate, then so is $Y$, and by \cite[Theorem 2.1]{AKZ2012} the variety $Y$ is flexible. Due to \cite[Theorem 1.6]{FKZ2016} we conclude that $X$ is also flexible, so the second assertion is proved.

If $X$ is smooth, then $X = X' \times (\KK^\times)^k$, where $X'$ is a smooth non-degenerate quasi-affine toric variety and $k \ge 0$. Therefore, it is sufficient to prove the first assertion for non-degenerate varieties. But now both the first and the third assertions follow from the second assertion and Theorem~\ref{main}.
\end{proof}

To sum up, for a smooth non-degenerate quasi-affine toric variety $X$ we have $\theta(X) = 2$ if $\dim X = 1$ and $\theta(X) = \infty$ otherwise. 

It is an open problem to find a criterion for a toric variety $X(\Sigma)$ to be homogeneous in terms of the fan $\Sigma$. Toric varieties that are homogeneous spaces of reductive groups are described in~\cite{AG2010}. A conjectural description of all homogeneous toric varieties is given in~\cite[Conjecture~1]{Ar12018}. A construction given in~\cite{Ar12018} provides a wide class of homogeneous toric varieties.  

\smallskip 

Let us proceed with an example of a homogeneous variety that is not a homogeneous space taken from~\cite[Example~2.2]{AKZ2012}. Consider an affine toric surface $X$ corresponding to a cone $\sigma = \Cone(v_1, v_2)$ in a lattice $N = \ZZ^2$, where $v_1$ and $v_2$ are primitive vectors in $N$. Since every primitive vector in $N$ can be supplemented to a basis of $N$, we may assume that $v_1 = (1, 0)$ and $v_2 = (a, b)$ for some $a, b \in \ZZ$. Applying an automorphism of $N$ given by a matrix of the form
$$
\begin{pmatrix}
1 & c \\
0 & \pm1
\end{pmatrix}
$$
we may further assume that $b >a\ge 0$. If $a = 0$ then $b = 1$ and $X = \AA^2$ is a homogeneous space. Let $a > 0$ and denote $X = X_{a, b}$. Applying the quotient presentation, we realize $X$ as the quotient $\AA^2 / C_b$, where $C_b$ is the cyclic group of order $b$ and, given a primitive $b$th root of unity $\zeta \in \KK$, the action of $C_b = \langle \zeta \rangle$ on $\AA^2$ is defined by
$$
\zeta \cdot (x, y) = (\zeta^a x, \zeta y).
$$
By Theorem~\ref{toric} the smooth locus $X_\reg=X\setminus \{P\}$, where $P$ is the $T$-fixed point on $X$, is a homogeneous variety. However, it follows from known classifications that $X_\reg$ is a homogeneous space if and only if $a = 1$; see~\cite{FZ2005} and references therein. 

If $a = 1$ then $X_\reg$ is isomorphic to the homogeneous space $\SL_2(\KK)/ H$, where
$$
H = \left\{
\left. \begin{pmatrix}
\epsilon & \alpha \\
0 & \epsilon^{-1}
\end{pmatrix} \right| \epsilon^b = 1, \ \alpha \in \KK
\right\}.
$$
Note also that $X_{a,b} \simeq X_{a',b'}$ if and only if $b' = b$ and $a=a'$ or $a a' \equiv 1 \mod b$.

Consider the special case $a = b - 1$. One may check that if we take the dual basis in $M$, then the semigroup $\sigma^\vee \cap M$ is generated by vectors $(0, 1), \ (1, 1)$, and $(b, b - 1)$. It means that $X_{b - 1, b}$ is isomorphic to the surface in $\AA^3$ given by equation $z^b = xy$.

\begin{question}
Consider the hypersurface $X(n, b)$ given in $\AA^{n + 1}$ by equation $x_{n+1}^b = x_1 \cdots x_n$. This is an affine toric variety, whose dual cone in a lattice $\ZZ^n$ with basis $e_1, \dots, e_n$ equals 
$$
\sigma^\vee = \Cone(e_1, \dots, e_{n - 1}, (b - 1) \sum_{i = 1}^{n - 1} e_i + b e_n).
$$
For which $n \ge 3, b \ge 2$ the smooth locus $X(n , b)_\reg$ (which is always homogeneous) is a homogeneous space?
\end{question}

\smallskip

It is a natural problem to describe other classes of homogeneous varieties which are not homogeneous spaces, compare with~\cite[Problem~2]{Ar22018}. Especially interesting task is to construct an affine homogeneous variety that is not a homogeneous space; such an example does not exist for toric varieties. 

\section{Homogeneous spaces of linear algebraic groups}
\label{homsp}

Let $X$ be an irreducible homogeneous space of a linear algebraic group $G$. We may assume that $G$ is connected. It is well known that the group $G$ is a semidirect product
$G^{\text{red}}\rightthreetimes R_u(G)$, where $G^{\text{red}}$ is a connected reductive subgroup and $R_u(G)$ is the unipotent radical of $G$; see~\cite[Theorem~6.4]{OV}. 

Let $G^{\text{s}}$ be the intersection of kernels of all characters of the group $G$. Then $G^{\text{s}}=G^{\text{ss}}\rightthreetimes R_u(G)$, where $G^{\text{ss}}$ is the maximal semisimple subgroup in $G^{\text{red}}$. The factorgroup $G/G^{\text{s}}$ is an algebraic torus $T$. In fact, $T$ is isomorphic to the quotient of the central torus in $G^{\text{red}}$
by a finite subgroup, which is the center of $G^{\text{ss}}$. 

We say that a homogeneous space $X$ is of the \emph{first type} if the subgroup $G^{\text{s}}$ acts on $X$ transitively. All other homogeneous spaces are of the \emph{second type}.

\begin{lemma}
\label{uslem}
A homogeneous space $X$ is of the second type if and only if there is a non-constant invertible regular function on $X$. 
\end{lemma}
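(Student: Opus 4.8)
The plan is to identify the group of invertible regular functions on $X=G/H$ modulo constants with the group of characters of $G$ that are trivial on $H$, and then to translate the existence of a non-trivial such character into a statement about the image of $H$ in the torus $T=G/G^{\text{s}}$.

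First I would recall that, for a connected linear algebraic group $G$, every invertible regular function on $G$ is a scalar multiple of a character (a theorem of Rosenlicht; see, e.g., \cite{KKV1989}), so that $\KK[G]^\times/\KK^\times$ is isomorphic to the character group $\mathfrak{X}(G)$. Since the projection $\pi\colon G\to X$ is a geometric quotient, pullback identifies $\KK[X]$ with the algebra of right $H$-invariant functions on $G$, and hence $\KK[X]^\times$ with the invertible right $H$-invariant functions. Combining these, a function $f\in\KK[X]^\times$ corresponds, up to a non-zero scalar, to a character $\chi$ of $G$ satisfying $\chi(h)=1$ for all $h\in H$; moreover $f$ is non-constant exactly when $\chi$ is non-trivial (as $\pi$ is surjective). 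Thus $X$ carries a non-constant invertible regular function if and only if there is a non-trivial character of $G$ trivial on $H$.

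Next I would use that, by the very definition of $G^{\text{s}}$ as the intersection of the kernels of all characters, every character of $G$ factors through $T=G/G^{\text{s}}$; write $\bar\pi\colon G\to T$ for the projection. A character $\chi=\bar\chi\circ\bar\pi$ is trivial on $H$ precisely when $\bar\chi$ is trivial on the image $\bar\pi(H)$, which is a closed subgroup of $T$. The characters of $T$ trivial on $\bar\pi(H)$ are exactly the characters of the quotient $T/\bar\pi(H)$, which is again a torus, and a non-trivial one exists if and only if this torus is non-trivial, i.e. $\bar\pi(H)\ne T$.

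Finally I would observe that $\bar\pi(H)=T$ is equivalent to $G^{\text{s}}H=G$, which is in turn exactly the condition that $G^{\text{s}}$ acts transitively on $X=G/H$, that is, that $X$ is of the first type. Reading the chain of equivalences backwards gives: $X$ is of the second type $\iff$ $G^{\text{s}}$ is not transitive $\iff$ $G^{\text{s}}H\ne G$ $\iff$ $\bar\pi(H)\ne T$ $\iff$ a non-trivial character of $G$ trivial on $H$ exists $\iff$ a non-constant invertible regular function on $X$ exists, which is the claim. The only structural input is Rosenlicht's description of the units of $\KK[G]$ together with the descent of functions along $\pi$; the step requiring the most care is this identification of $\KK[X]^\times/\KK^\times$ with the characters of $G$ trivial on $H$, after which the torus-duality step and the reformulation in terms of transitivity are routine.
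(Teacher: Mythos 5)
Your proof is correct and takes essentially the same route as the paper: both arguments reduce the dichotomy to whether the image of $H$ under the projection $G \to T = G/G^{\text{s}}$ is all of $T$ (equivalently $G^{\text{s}}H=G$), both invoke the Rosenlicht-type result from \cite{KKV1989} identifying invertible functions with characters up to scalar, and both obtain the non-constant invertible function from a non-trivial character of $T$ killing the image of $H$. The only difference is organizational: you apply the units-equal-characters theorem to $G$ once and run a single chain of equivalences, whereas the paper argues by cases on the surjectivity of $H\to T$, applying that theorem to $G^{\text{s}}$ (whose characters are trivial) in the surjective case.
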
 

\begin{proof}
Let $H$ be the stabilizer in $G$ of a point on $X$. Then the variety $X$ is isomorphic to $G/H$, and the algebra of regular functions $\KK[X]$ may by identified with the algebra $\KK[G]^H$ 
of regular functions on $G$ that are invariant with respect to the action of $H$ on $G$ by right translations. 

Let us consider the restriction $\varphi\colon H\to T$ of the projection $G\to T$. If the homomorphism $\varphi$ is surjective, then for any $g\in G$ we can find an element $h\in H$
whose image in $T$ coincides with the projection of $g$. Equivalently, the element $gh^{-1}$ is contained in $G^{\text{s}}$. Since $(gh^{-1})(eH)=gH$, we conclude that the space $X$ is of the first type. In this case the pullback of an invertible regular function on $G/H$ to $G^s$ is an invertible regular function on $G^s$. By~\cite[Proposition~1.2]{KKV1989}, such a function is proportional to a character of $G^s$. Since any character of $G^s$ is trivial, the corresponding invertible function is a nonzero constant.  

Now assume that the homomorphism $\varphi$ is not surjective. Then its image is contained in the kernel of a non-trivial character $\lambda$ of the torus $T$. The pullback of $\lambda$ to
$G$ is an $H$-invariant regular function, which may be regarded as a non-constant invertible regular function on $X$. Such a function is $G^{\text{s}}$-invariant, so $X$ is of the second type. 
\end{proof}

Now we are ready to compute the transitivity degree of many homogeneous spaces. The following proposition is a reformulation of \cite[Proposition~5.4]{AFKKZ12013}. 

\begin{proposition}
\label{homft}
If a homogeneous space $X$ of the first type is a quasi-affine variety of dimension $\ge 2$, then $\theta(X)=\infty$.
\end{proposition}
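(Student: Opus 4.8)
The plan is to show that the subgroup $\SAut(X)$ already acts transitively on $X$, and then to upgrade transitivity to infinite transitivity by a direct appeal to Theorem~\ref{main}. Since $X$ is a homogeneous space it is smooth, so $X_{\reg}=X$; moreover $X$ is irreducible, quasi-affine, and of dimension $\ge 2$, so Theorem~\ref{main} applies and its three conditions are equivalent. It therefore suffices to verify condition~(1), namely that $\SAut(X)$ acts transitively on $X=X_{\reg}$.

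To this end I would realize $G^{\text{s}}$ inside $\SAut(X)$. Recall that $G^{\text{s}}=G^{\text{ss}}\rightthreetimes R_u(G)$, where $G^{\text{ss}}$ is connected semisimple and $R_u(G)$ is connected unipotent. A connected unipotent group is generated by its $\GG_a$-subgroups, and a connected semisimple group (in characteristic zero) is generated by the one-parameter unipotent root subgroups $U_\alpha\cong\GG_a$. Hence $G^{\text{s}}$ is generated by $\GG_a$-subgroups. Composing with the action homomorphism $G^{\text{s}}\to\Aut(X)$, the image of each such $\GG_a$-subgroup is again a $\GG_a$-subgroup (or is trivial), so the image of $G^{\text{s}}$ lies in $\SAut(X)$.

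By the definition of the first type, $G^{\text{s}}$ acts on $X$ transitively; since its image is contained in $\SAut(X)$, the group $\SAut(X)$ acts transitively on $X=X_{\reg}$. This is condition~(1) of Theorem~\ref{main}, so condition~(2) holds as well: $\SAut(X)$ acts infinitely transitively on $X_{\reg}=X$. As $\SAut(X)\subseteq\Aut(X)$, the full automorphism group acts infinitely transitively, that is $\theta(X)=\infty$.

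The substantive content is carried by Theorem~\ref{main}; the only point requiring care in the argument above is the claim that $G^{\text{s}}$ is generated by $\GG_a$-subgroups, which rests on the characteristic-zero structure theory of connected linear algebraic groups (generation of the semisimple part by root subgroups and of the unipotent radical by one-parameter subgroups). An alternative, equivalent route would verify flexibility directly: the differential of the orbit map $\Lie(G^{\text{s}})\to T_xX$ is surjective by transitivity, and $\Lie(G^{\text{s}})$ is spanned by nilpotent elements (the nilradical $\Lie(R_u(G))$ consists of nilpotents, and a semisimple Lie algebra is spanned by its nilpotent elements), each of which is the velocity vector of a $\GG_a$-orbit through $x$; this realizes condition~(3) of Theorem~\ref{main} instead of condition~(1) and yields the same conclusion.
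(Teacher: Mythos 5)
Your proof is correct and follows essentially the same route as the paper: both arguments show that $G^{\text{s}}$ is generated by $\GG_a$-subgroups, hence its image lies in $\SAut(X)$, so transitivity of the first-type action gives condition (1) of Theorem~\ref{main} and therefore infinite transitivity. The only difference is that the paper cites \cite[Lemma~1.1]{Po2011} for the generation statement, whereas you derive it directly from characteristic-zero structure theory (root subgroups of $G^{\text{ss}}$ and one-parameter subgroups of $R_u(G)$), which is a fine substitute.
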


\begin{proof}
The group $G^{\text{s}}$ is generated by $\GG_a$-subgroups, see~\cite[Lemma~1.1]{Po2011}. By Lemma~\ref{uslem}, the group $G^{\text{s}}$, and thus the group $\SAut(X)$, acts on $X$ transitively. The claim now follows from Theorem~\ref{main}. 
\end{proof}

\begin{proposition}
\label{homst}
If $X$ is a homogeneous space of the second type, then $\theta(X)=1$.
\end{proposition}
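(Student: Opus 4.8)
The plan is to reduce the statement to Theorem~\ref{theoif} by way of Lemma~\ref{uslem}, which is exactly the tool tailored for this situation. First I would invoke Lemma~\ref{uslem}: by assumption $X$ is of the second type, and the lemma states precisely that being of the second type is equivalent to the existence of a non-constant invertible regular function on $X$. So I immediately obtain a non-constant function in $\KK[X]^\times$.

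Next I would verify that the remaining hypotheses of Theorem~\ref{theoif} are satisfied. By the standing conventions of this section, $X$ is an irreducible homogeneous space of a connected linear algebraic group $G$; in particular $G$ acts on $X$ transitively by automorphisms, so $G$ embeds into $\Aut(X)$ and the larger group $\Aut(X)$ acts transitively on $X$ as well. Thus $X$ is an irreducible homogeneous algebraic variety that carries a non-constant invertible regular function, and Theorem~\ref{theoif} applies verbatim to give $\theta(X)=1$.

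I do not expect a genuine obstacle here: once Lemma~\ref{uslem} is in hand, the proposition is an immediate corollary of Theorem~\ref{theoif}. The only point deserving a word of care is the passage from \emph{homogeneous space} to \emph{homogeneous variety} — that is, the observation that transitivity of the $G$-action forces transitivity of the $\Aut(X)$-action — but this is clear since $G\subseteq\Aut(X)$, so no extra work is needed.
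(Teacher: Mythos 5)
Your proposal is correct and coincides with the paper's own proof, which likewise deduces the statement immediately from Lemma~\ref{uslem} combined with Theorem~\ref{theoif}. The extra remark that transitivity of the $G$-action gives transitivity of $\Aut(X)$ is exactly the (trivial) bridge the paper leaves implicit.
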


\begin{proof}
It follows from Theorem~\ref{theoif} and Lemma~\ref{uslem}. 
\end{proof}

\begin{example}
Let $X$ be a homogeneous space of the group $\GL_n(\KK)$. If $X$ is not homogeneous with respect to the subgroup $\SL_n(\KK)$, then $\theta(X)=1$. If $X$ is homogeneous with respect to $\SL_n(\KK)$ and quasi-affine, then $\theta(X)=\infty$. 
\end{example}

Let us introduce two important classes of algebraic subgroups. One says that an algebraic subgroup $H$ of a linear algebraic group $G$ is \emph{observable} if the homogeneous space $G/H$ is a quasi-affine variety. A group-theoretic criterion for a subgroup $H$ to be observable was obtained by Sukhanov; see e.g.~\cite[Theorem~7.3]{Gr1997}.  For example, all reductive subgroups $H$ and all subgroups $H$ without non-trivial characters are observable. 

Further, an algebraic subgroup $H$ of a linear algebraic group $G$ is \emph{epimorphic} if we have $\KK[G/H]=\KK$. It is well known that the homogeneous space $G/H$ is complete if and only if $H$ is parabolic. So any parabolic subgroup in $G$ is epimorphic. In fact, the class of epimorphic subgroups is much wider, and it is an open problem to find a group-theoretic criterion for a subgroup $H$ to be epimorphic; see~\cite[Section~1.23]{Gr1997} for discussion, partial results and examples and~\cite{Br12017} for recent results. 

Assume that $H$ is not epimorphic. If $H$ is observable, then $\theta(G/H)$ can be computed by applying Proposition~\ref{homft} and Proposition~\ref{homst}. If $H$ is not observable, then $\theta(G/H)=1$ by Proposition~\ref{propaa}. So, the only remaining case is given in the following problem.

\begin{problem}
Let $G$ be a connected linear algebraic group and $H$ be an epimorphic subgroup of $G$. Find $\theta(G/H)$. 
\end{problem}

\section{The main conjecture and related questions}
\label{mainconj}

We are ready to formulate the main conjecture of this paper. It claims that for a quasi-affine variety $X$ satisfying some mild necessary conditions homogeneity implies infinite transitivity of the automorphism group. 

\begin{conjecture}
\label{con}
Let $X$ be an irreducible homogeneous quasi-affine variety with ${\dim X\ge 2}$ and $\KK[X]^{\times}=\KK^{\times}$. Then $\theta(X)=\infty$. 
\end{conjecture}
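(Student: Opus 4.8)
The natural target is flexibility. By Theorem~\ref{main}, an irreducible quasi-affine variety of dimension $\ge 2$ is flexible if and only if $\SAut(X)$ acts transitively on $X_{\reg}$, and in that case $\SAut(X)$ automatically acts infinitely transitively on $X_{\reg}$, so that $\theta(X)=\infty$. Since a homogeneous variety is smooth we have $X_{\reg}=X$, and since $\Aut(X)$ acts transitively, flexibility at a single point $x\in X$ suffices. The whole problem therefore reduces to exhibiting finitely many $\GG_a$-subgroups of $\Aut(X)$ whose tangent vectors to the orbits through $x$ span $T_xX$.

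First I would dispose of the case in which some connected linear algebraic group $G$ acts on $X$ transitively, so that $X\cong G/H$ is a homogeneous space. Then the hypothesis $\KK[X]^{\times}=\KK^{\times}$ forces $X$ to be of the first type by Lemma~\ref{uslem}, and Proposition~\ref{homft} gives $\theta(X)=\infty$ at once. The quasi-affine toric case is likewise settled, since Theorem~\ref{toric} confirms the conjecture for every smooth non-degenerate quasi-affine toric variety of dimension $\ge 2$. These two families supply the evidence for the conjecture and isolate the genuine difficulty: homogeneous varieties that are \emph{not} homogeneous spaces and carry no torus action, for which the transitivity of $X$ is witnessed only by the abstract, infinite-dimensional group $\Aut(X)$, such as the smooth loci of the toric surfaces $X_{a,b}$ of Section~\ref{subsec_toric} with $a>1$.

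In that remaining regime I would pursue the criterion of Theorem~\ref{ctrans}. The plan has two steps: (i) produce from the transitivity of $\Aut(X)$ at least one non-trivial $\GG_a$- or $\GG_m$-subgroup of $\Aut(X)$; and (ii) upgrade the given transitivity to $2$-transitivity, that is, show that the stabiliser $\Aut(X)_x$ acts transitively on $X\setminus\{x\}$. Granting both, Theorem~\ref{ctrans} yields transitivity of $\SAut(X)$, and Theorem~\ref{main} then delivers infinite transitivity and $\theta(X)=\infty$; equivalently one may try to bypass step (ii) and span $T_xX$ directly by $\GG_a$-orbits as in the reduction above. Either way the construction of $\GG_a$-subgroups in step (i) is the crux.

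The hard part is exactly this production of $\GG_a$-subgroups from abstract transitivity. The hypotheses were chosen to annihilate every classical obstruction: Theorem~\ref{theoif} excludes non-constant invertible functions and Proposition~\ref{propaa} excludes non-quasi-affine behaviour, so under $\KK[X]^{\times}=\KK^{\times}$ together with quasi-affineness no known barrier to infinite transitivity survives. But removing obstructions does not manufacture unipotent one-parameter subgroups, and when no algebraic group acts transitively there is at present no mechanism that extracts even a single $\GG_a$-action from the ind-group $\Aut(X)$. This is compounded by the connectedness gap recorded in Remark~\ref{rem1}: without an analogue of Lemma~\ref{lemcom} relating $\Aut(X)$, its component $\Aut(X)^{0}$, and $\SAut(X)$, the normal subgroup $\SAut(X)$ could a priori have infinitely many orbits permuted transitively by an infinite quotient, so the passage from transitivity of $\Aut(X)$ to transitivity of $\SAut(X)$ is not formal. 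Closing this construction-and-connectedness gap is what, in my view, keeps the statement a conjecture rather than a theorem.
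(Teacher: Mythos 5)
This statement is the paper's main conjecture: the paper offers no proof, only the same partial evidence you assemble --- Theorem~\ref{toric}~(c) for quasi-affine toric varieties, Lemma~\ref{uslem} together with Proposition~\ref{homft} for homogeneous spaces of algebraic groups, and the conditional criterion of Theorem~\ref{ctrans}. Your write-up matches the paper's own discussion essentially point for point, and your diagnosis of why it remains a conjecture --- no known mechanism extracts a single $\GG_a$-subgroup of $\Aut(X)$ from abstract transitivity when no algebraic group acts transitively, and the passage from transitivity of $\Aut(X)$ to transitivity of the normal subgroup $\SAut(X)$ is blocked by the connectedness gap recorded in Remark~\ref{rem1} --- correctly identifies the open obstruction rather than claiming a proof that does not exist.
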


This conjecture holds for wide classes of varieties. For example, by Theorem~\ref{toric}~(c), Conjecture~\ref{con} is true for toric varieties. Lemma~\ref{uslem} and Proposition~\ref{homft} show that Conjecture~\ref{con} holds for homogeneous spaces of algebraic groups. Moreover, Theorem~\ref{ctrans} claims that if an irreducible quasi-affine variety $X$ of dimension $\ge 2$ admits a non-trivial $\GG_a$- or $\GG_m$-action and $\theta(X)\ge 2$, then $\theta(X)=\infty$. 

\smallskip

Let us proceed with some possible specific properties of homogeneous varieties. We begin with properties related to rationality. Let us recall that an irreducible variety $X$ is \emph{rational} if the field of rational functions $\KK(X)$ is a purely transcendental extension of the base field~$\KK$. Equivalently, $X$ is rational if and only if $X$ contains an open subset isomorphic to an open subset of some affine space. In particular, every toric variety is rational. Also any linear algebraic group is a rational affine variety. 

An irreducible variety $X$ is \emph{stably rational} if some purely transcendental extension $\KK(X)(y_1,\ldots,y_m)$ over $\KK(X)$ is purely transcendental over $\KK$. Geometrically it means
that for some positive integer $m$ the direct product $X\times\AA^m$ is rational. Clearly, any rational variety is stably rational. In \cite[Example~1.22]{Po2011}, examples of homogeneous spaces of the group $\SL_n(\KK)$ that are not stably rational are given. In particular, not every flexible variety is stably rational.

An irreducible variety $X$ is \emph{unirational} if the field $\KK(X)$ can be embedded into a purely transcendental extension of the field $\KK$. Geometrically, $X$ is unirational if and only if
there is a dominant rational morphism from some affine space $\AA^m$ to $X$. Clearly, any stably rational variety is unirational. It is proved in \cite[Proposition~5.1]{AFKKZ12013} than any flexible variety is unirational. If $X$ is complete and $\Aut(X)$ acts (generically) 2-transitive on $X$, the variety $X$ is unirational \cite[Corollary~3]{Po2014}.

On the other hand, a homogeneous variety need not be unirational; one may take an elliptic curve as an example. 

\begin{question}
Does the condition $\theta(X)\ge 2$ imply that $X$ is unirational? 
\end{question} 

Now we come to one more geometric property. The classical Chevalley's Theorem~\cite[Theorem~11.2]{Hu1975} claims that any homogeneous space $G/H$ of a linear algebraic group $G$ is a quasi-projective variery. This result holds also for homogeneous spaces of arbitrary algebraic groups; see~\cite[Theorem~5.2.2]{Br22017}. Moreover, it is shown in~\cite[Corollary~4]{Ar12018} that any homogeneous toric variety is quasi-projective. 

\begin{question}
\label{que}
Is any homogeneous variety quasi-projective? 
\end{question}

By the Kleiman-Chevalley criterion~\cite{Kle}, a smooth variety is quasi-projective if and only if every finite subset admits a common affine neighboorhood; later this result was generalized to singular varieties as well. This criterion implies that the answer to Question~\ref{que} is positive provided $\theta(X)=\infty$.

Further, it is proved in \cite[Theorem~A]{Wl} that an irreducible normal variety $X$ admits a closed embedding into a toric variety if and only if every pair of points on $X$ is contained in a common affine neighborhood. Since on a smooth variety every Weil divisor is Cartier, by \cite[Theorem~2]{Hau} we may assume that the ambient toric variety is smooth. So we conclude that if $\theta(X)\ge 2$ then $X$ admits a closed embedding into a smooth toric variety. This property may be considered as a weaker version of quasi-projectivity. 

\section{Appendix}
\label{secapp}

In this section we prove that regular functions separate points on an irreducible algebraic variety $X$ if and only if $X$ is quasi-affine.

\begin{proposition}
\label{propqa}
Let $X$ be an irreducible algebraic variety. The following conditions are equivalent.
\begin{enumerate}
\item
The variety $X$ is quasi-affine.
\item
Regular functions separate points on $X$.
\item 
There are finitely many regular functions that separate points on $X$.
\end{enumerate}
\end{proposition}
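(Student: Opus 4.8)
The plan is to prove the cycle of implications $(a)\Rightarrow(c)\Rightarrow(b)\Rightarrow(a)$. The implication $(c)\Rightarrow(b)$ is immediate, and $(a)\Rightarrow(c)$ is almost as easy: if $X$ is realized as a locally closed subvariety of some $\AA^n$, then the $n$ restrictions to $X$ of the coordinate functions are regular functions that separate points, so finitely many functions suffice.

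For $(b)\Rightarrow(a)$ I would first extract finitely many separating functions from the a priori infinite supply. Consider the product $X\times X$, which is again a variety, and its open subset $W=(X\times X)\setminus\Delta$; here $\Delta$ is closed because $X$ is separated, and $W$ is quasi-compact because the space $X\times X$ is noetherian. For each $f\in\KK[X]$ the function $\mathrm{pr}_1^*f-\mathrm{pr}_2^*f$ is regular on $X\times X$, and its non-vanishing locus $W_f=\{(x,y)\mid f(x)\ne f(y)\}$ is exactly the set of pairs separated by $f$. Hypothesis $(b)$ says that the $W_f$ cover $W$, so by quasi-compactness finitely many of them, say $W_{f_1},\dots,W_{f_k}$, already cover $W$; equivalently, $f_1,\dots,f_k$ separate points, which is $(c)$.

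It remains to deduce $(a)$ from $(c)$, and this is the heart of the argument. Given separating functions $f_1,\dots,f_k$, form the morphism $\psi=(f_1,\dots,f_k)\colon X\to\AA^k$. Since the $f_i$ separate points, $\psi$ is injective, hence quasi-finite; it is also separated (as a morphism between varieties) and of finite type. I would then invoke Zariski's Main Theorem to factor $\psi$ as $X\xrightarrow{\,j\,}X'\xrightarrow{\,g\,}\AA^k$ with $j$ an open immersion and $g$ finite. As $\AA^k$ is affine and $g$ is finite, $X'$ is affine, and therefore $X\cong j(X)$ is an open subvariety of an affine variety, i.e.\ quasi-affine.

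The genuinely delicate step is $(c)\Rightarrow(a)$: injectivity on points only yields a quasi-finite morphism, and an injective morphism need not be an immersion (for instance the normalization of a cuspidal curve is bijective and birational but not an embedding), so one cannot hope that $\psi$ is itself a locally closed embedding, nor that the global functions generate the coordinate rings of an affine cover. The role of Zariski's Main Theorem is precisely to upgrade the quasi-finite separated map $\psi$ into an open immersion into a finite, hence affine, scheme over $\AA^k$; this is what produces the quasi-affine structure while sidestepping the fact that $\KK[X]$ may fail to be finitely generated. If one prefers to avoid citing this theorem, the same conclusion can be reached by passing to the normalization and using that in characteristic zero an injective dominant morphism is birational onto its image, but the Zariski's Main Theorem route keeps the argument shortest.
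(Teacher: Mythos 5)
Your proof is correct, and while it ends the same way as the paper (Zariski's Main Theorem factoring a quasi-finite separated morphism as an open immersion followed by a finite, hence affine, morphism onto an affine base), the route to that point is genuinely different. The paper proves $(b)\Rightarrow(a)$ directly: starting from an arbitrary finitely generated subalgebra $A\subseteq\KK[X]$, it repeatedly adjoins functions that are non-constant on positive-dimensional fibre components of $X\to\Spec A$, using the semicontinuity theorem on fibre dimension and a noetherian induction on the locus of bad fibres, until the morphism has finite fibres; separation of points enters only to guarantee that such non-constant functions exist. You instead interpolate the implication $(b)\Rightarrow(c)$, which the paper never proves directly (it gets it only by composing $(b)\Rightarrow(a)\Rightarrow(c)$): the complement $W$ of the diagonal in $X\times X$ is a noetherian, hence quasi-compact, space covered by the open sets $W_f=\{(x,y)\mid f(x)\ne f(y)\}$, so finitely many $f_1,\dots,f_k$ already separate points. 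Your morphism $\psi=(f_1,\dots,f_k)\colon X\to\AA^k$ is then injective --- stronger than the merely quasi-finite morphism the paper constructs --- and ZMT finishes identically. What your route buys: the iterative fibre-dimension argument is replaced by a one-line compactness argument, and the affine target is explicit. What the paper's route buys: it avoids the product $X\times X$ and the extraction step altogether, working only with fibre dimensions over affine models. One caution: your closing aside, that ZMT could be avoided by passing to the normalization and using that an injective dominant morphism is birational onto its image in characteristic zero, is not a complete argument as stated (birationality alone does not produce an open immersion into an affine variety --- this is exactly the cuspidal-curve difficulty you yourself raise); since it is explicitly optional, it does not affect the correctness of the main proof.
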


\begin{proof}
$(a) \Rightarrow (c)$ If $X$ is a locally closed subset in an affine space, then the coordinate functions separate points on $X$. 

\smallskip

$(c) \Rightarrow (b)$ is obvious. 

\smallskip

$(b)\Rightarrow (a)$. We learned this proof from Dmitri Timashev. For any finitely generated subalgebra $A\subseteq \KK[X]$ we have a dominant morphism $\varphi\colon X\to \Spec A$. By~\cite[Theorem~1.25]{Sha}, there is an open subset $U$ in $\Spec A$ such that for any $y\in U$ all irreducible components of the fiber $\varphi^{-1}(y)$ are of the same dimension $d$ and any other component of any other nonempty fiber of $\varphi$ has dimension $\ge d$. Assume that $d\ge 1$ and fix a point $y_0\in U$. There is a function $f\in\KK[X]$ which is non-constant on some component of the fiber $\varphi^{-1}(y_0)$. If we replace the algebra $A$ by the algebra generated by $A$ and $f$, then some fibers of the new morphism $\varphi\colon X\to \Spec A$ have dimension $<d$. Again by~\cite[Theorem~1.25]{Sha} this is the case for all components of a generic fiber of the new morphism. Continuing this process, we obtain a morphism $\varphi\colon X\to \Spec A$ where fibers over points from an open subset $U$ in $\Spec A$ are finite. 

The preimage $Z$ of $\Spec A\setminus U$ in $X$ is a closed subvariety of dimension $<\dim X$. Again adding regular functions to $A$ we achive that generic fibers of the restriction of $\varphi$ to any irreducible component of $Z$ are finite. Repeating this procedure, we obtain a morphism $\varphi\colon X\to \Spec A$ with finite fibers. 

By Zariski's Main Theorem \cite[Section~III.9]{Mum}, the morphism $\varphi$ can be decomposed as $X\to X'\to \Spec A$, where the first arrow is an open embedding and the second arrow is a finite morphism. Since a finite morphism is an affine morphism and the variety $\Spec A$ is affine, the variety $X'$ is affine as well. So, $X$ is isomorphic to an open subset of an affine variety. 
\end{proof}


%
\end{document}